\theoremstyle{plain}
\newtheorem{theorem}{Theorem}
\newtheorem{corollary}[theorem]{Corollary}
\newtheorem{lemma}[theorem]{Lemma}
\newtheorem{proposition}[theorem]{Proposition}
\theoremstyle{definition}
\newtheorem{remark}[theorem]{Remark}
\newtheorem{example}[theorem]{Example}
\newcommand{\abs}[1]{\lvert#1\rvert}
\newcommand{\norm}[1]{\lVert#1\rVert}
\newcommand{\bigabs}[1]{\bigl\lvert#1\bigr\rvert}
\newcommand{\bignorm}[1]{\bigl\lVert#1\bigr\rVert}
\newcommand{\Bigabs}[1]{\Bigl\lvert#1\Bigr\rvert}
\newcommand{\Bignorm}[1]{\Bigl\lVert#1\Bigr\rVert}
\renewcommand{\le}{\leqslant}
\renewcommand{\ge}{\geqslant}
\renewcommand{\mid}{\::\:}
\newcommand{\term}[1]{{\textit{\textbf{#1}}}}
\def\@tvsp{\mathchoice{{}\mkern-4.5mu}{{}\mkern-4.5mu}{{}\mkern-2.5mu}{}}
\def\ltrivert{\left|\@tvsp\left|\@tvsp\left|}
\def\rtrivert{\right|\@tvsp\right|\@tvsp\right|}
\def\bltrivert{\bigl|\@tvsp\bigl|\@tvsp\bigl|}
\def\brtrivert{\bigr|\@tvsp\bigr|\@tvsp\bigr|}
\newcommand{\trinorm}[1]{\ltrivert#1\rtrivert}
\def\api{{\abs{\!\pi\!}}}
\def\ftp{\otimes_{\!\scriptscriptstyle\api}\!}
\def\xyroot{x^{\frac{1}{2}}{y^{\frac{1}{2}}}}
\def\Ioc{I_{\rm oc}}
\DeclareMathOperator{\Span}{span}
\DeclareMathOperator{\cspan}{\overline{span}}
\DeclareMathOperator{\sign}{sign}
\begin{document}
\baselineskip 18pt

\title[The 2-concavification is the diagonal of a tensor square]
      {The 2-concavification of a Banach lattice equals the diagonal 
          of the Fremlin tensor square}

\author[Q.~Bu]{Qingying Bu}
\author[G.~Buskes]{Gerard Buskes}
\author[A.~I.~Popov]{Alexey I. Popov}
\author[A.~Tcaciuc]{Adi Tcaciuc}
\author[V.~G.~Troitsky]{Vladimir G. Troitsky}

\address[Q.~Bu and G.~Buskes]
   {Department of Mathematics, University of Mississippi, University,
     MS 38677-1848. USA}
\email{qbu@olemiss.edu, mmbuskes@olemiss.edu}

\address[A.~I.~Popov]
  {Department of Pure Mathematics, Faculty of Mathematics
   University of Waterloo, Waterloo, Ontario, N2L 3G1. Canada}
\email{a4popov@uwaterloo.ca}

\address[A.~Tcaciuc]{Mathematics and Statistics Department,
   Grant MacEwan University, Edmonton, AB, T5J\,P2P, Canada}
\email{atcaciuc@ualberta.ca}

\address[V.~G.~Troitsky]
  {Department of Mathematical
  and Statistical Sciences, University of Alberta, Edmonton,
  AB, T6G\,2G1. Canada}
\email{troitsky@ualberta.ca}

\thanks{The third, forth, and fifth authors were supported by NSERC}
\keywords{Banach lattice, Fremlin projective tensor product, diagonal
  of tensor square, square of a Banach lattice, concavification}
\subjclass[2010]{Primary: 46B42. Secondary: 46M05, 46B40, 46B45.}

\begin{abstract}
  We investigate the relationship between the diagonal of the Fremlin
  projective tensor product of a Banach lattice $E$ with itself and the
  2-concavification of~$E$.
\end{abstract}

\date{\today}

\maketitle

\section{Introduction and preliminaries}

It is easy to see that the diagonal of the projective tensor product
$\ell_p\otimes_\pi\ell_p$ is isometric to $\ell_{\frac{p}{2}}$ if
$p\ge 2$ and to $\ell_1$ if $1\le p\le 2$. In this paper, we extend
this fact to all Banach lattices. It turns out that the ``right''
tensor product for this problem is the Fremlin projective tensor
product $E\ftp F$ of Banach lattices $E$ and $F$. Given a Banach
lattice~$E$, we define (following \cite{BuskesBu}) the diagonal of
$E\ftp E$ to be the quotient of $E\ftp E$ over the closed order ideal
$\Ioc$ generated by the set $\bigl\{(x\otimes y)\mid x\perp y\bigr\}$.
We study the relationship of this diagonal with the 2-concavification
of~$E$. In the literature, it has been observed (see, e.g.,
~\cite{Lindenstrauss:79}) that the $p$-concavification $E_{(p)}$ of
$E$ is again a Banach lattice when $E$ is $p$-convex. However, without
the $p$-convexity assumption, $E_{(p)}$ is only a semi-normed lattice.
We show that in the case when $E$ is 2-convex, the diagonal of $E\ftp
E$ is lattice isometric to~$E_{(2)}$ and that in general, the diagonal
of $E\ftp E$ is lattice isometric to~$E_{[2]}$, where $E_{[p]}$ is the
completion of $E_{(p)}/\ker\norm{\cdot}_{(p)}$. We also show that if
$E$ satisfies the lower $p$-estimate then $E_{[p]}$ is lattice
isomorphic to an AL-space. In particular, if $E$ satisfies the lower
2-estimate then the diagonal of $E\ftp E$ is lattice isomorphic to an
AL-space.

We consider the special case when $E$ and $F$ are Banach lattices with
(1-unconditional) bases $(e_i)$ and $(f_i)$, respectively. We show
that the double sequence $(e_i\otimes f_j)$ is an unconditional basis
of $E\ftp F$ (while it need not be an unconditional basis for the
Banach space projective tensor product $E\otimes_{\pi} F$,
see~\cite{Kwapien:70}). We also show that in this case $E_{(p)}$ is a
normed lattice and the diagonal of $E\ftp E$ is lattice isometric to
the completion of $E_{(2)}$ via $e_i\otimes e_i\mapsto e_i$. Moreover,
if $(e_i)$ is normalized and $E$ satisfies the lower $p$-estimate then
the completion of $E_{(p)}$ is lattice isomorphic to $\ell_1$. In
particular, if $E$ satisfies the lower 2-estimate then the diagonal of
$E\ftp E$ is lattice isometric to $\ell_1$.

In the rest of this section, we provide some background facts that are
necessary for our exposition.

\subsection{Fremlin tensor product}
We refer the reader to \cite{Fremlin:72,Fremlin:74} for a detailed
original definition of the Fremlin tensor product $E\ftp F$ of two
Banach lattices $E$ and~$F$. However, we will only use a few facts
about $E\ftp F$ that we describe here.

Suppose $E$ and $F$ are two Banach lattices. We write $E\otimes F$ for
their algebraic tensor product; for $x\in E$ and $y\in F$ we write
$x\otimes y$ for the corresponding elementary tensor in $E\otimes F$.
Every element of $E\otimes F$ is a linear combination of elementary
tensors. Let $G$ be another Banach lattice and $\varphi\colon E\times
F\to G$ be a bilinear map. Then $\varphi$ induces a map
$\hat\varphi\colon E\otimes F\to G$ such that $\hat\varphi(x\otimes
y)=\varphi(x,y)$ for all $x\in E$ and $y\in F$. We say that $\varphi$
is continuous if its norm, defined by
\begin{displaymath}
  \norm{\varphi}=\sup\bigl\{\bignorm{\varphi(x,y)}\mid
  \norm{x}\le 1,\ \norm{y}\le 1\bigr\},
\end{displaymath}
is finite. We say that $\varphi$ is \term{positive} if
$\varphi(x,y)\ge 0$ whenever $x,y\ge 0$ and that $\varphi$ is a
\term{lattice bimorphism} if
$\bigabs{\varphi(x,y)}=\varphi\bigl(\abs{x},\abs{y}\bigr)$ for all
$x\in E$ and $y\in F$.  We say that $\varphi$ is \term{orthosymmetric}
if $\varphi(x,y)=0$ whenever $x\perp y$.

For $u\in E\otimes F$, put
\begin{equation}
  \label{norm-phi}
  \norm{u}_\api=
    \sup\norm{\hat\varphi(u)},
\end{equation}
where the supremum is taken over all Banach lattices $G$ and all
positive bilinear maps $\varphi$ from $E\times F$ to $G$ with
$\norm{\varphi}\le 1$. Theorem~1E in \cite[p.~89]{Fremlin:74} proves
that $\norm{\cdot}_\api$ is a norm on $E\otimes F$, and the completion
of $E\otimes F$ with respect to this norm is again a Banach lattice.
We will write $E\ftp F$ for this space and call it the
\term{Fremlin tensor product} of $E$ and~$F$. The Fremlin tensor norm
is a cross norm, i.e., $\norm{x\otimes y}_\api=\norm{x}\cdot\norm{y}$
whenever $x\in E$ and $y\in F$.

\begin{remark}\label{phi-tensor}
  (See 1E(iii) and 1F in \cite[p.~92]{Fremlin:74}.)  Let $E$, $F$, and
  $G$ be Banach lattices. There is a one-to-one norm preserving
  correspondence between continuous positive bilinear maps
  $\varphi\colon E\times F\to G$ and positive operators $T\colon E\ftp
  F\to G$ such that $T(x\otimes y)=\varphi(x,y)$ for all $x\in E$ and
  $y\in F$. We will denote $T=\varphi^\otimes$. Furthermore, $\varphi$
  is a lattice bimorphism if and only if $T$ is a lattice
  homomorphism.
\end{remark}

There is an alternative definition of $E\ftp F$, cf.
\cite[1I]{Fremlin:74} and~\cite[pp.~203-204]{Schaefer:80}. Recall that,
being a dual Banach lattice, $F^*$ is Dedekind complete by
\cite[Theorem~3.49]{Aliprantis:06}, so that the space of regular
operators $L^r(E,F^*)$ is a Banach lattice with respect to the regular
norm $\norm{\cdot}_r$, see \cite[p.~255]{Aliprantis:06}.

\begin{proposition}\label{ftp-repr}
  If $E$ and $F$ are Banach lattices then $E\ftp F$ can be identified
  with a closed sublattice of $L^r(E,F^*)^*$ such that $\langle
  x\otimes y, T\rangle=\langle Tx,y\rangle$ for $x\in E$, $y\in F$,
  and $T\in L^r(E,F^*)$.
\end{proposition}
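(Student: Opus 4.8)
The plan is to realise the asserted identification as the operator induced by an explicit positive bilinear map, and then to check that this operator is a lattice isometry whose range is closed. First I would introduce the bilinear map $\psi\colon E\times F\to L^r(E,F^*)^*$ given by $\langle\psi(x,y),T\rangle=\langle Tx,y\rangle$ for $T\in L^r(E,F^*)$. Since the dual of a Banach lattice is a Banach lattice and $L^r(E,F^*)$ is one (because $F^*$ is Dedekind complete), the target $L^r(E,F^*)^*$ is admissible. The map $\psi$ is bilinear and positive, for if $x,y\ge0$ and $T\ge0$ then $Tx\ge0$ in $F^*$ and hence $\langle Tx,y\rangle\ge0$; it is contractive, since $\abs{\langle Tx,y\rangle}\le\norm{Tx}\,\norm{y}\le\norm{T}_r\norm{x}\norm{y}$, so $\norm{\psi}\le1$. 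By Remark~\ref{phi-tensor}, $\psi$ induces a positive operator $J:=\psi^\otimes\colon E\ftp F\to L^r(E,F^*)^*$ with $J(x\otimes y)=\psi(x,y)$ and $\norm{J}=\norm{\psi}\le1$; thus $\norm{Ju}\le\norm{u}_\api$. For $u=\sum_k x_k\otimes y_k$ one has $\langle Ju,T\rangle=\sum_k\langle Tx_k,y_k\rangle$, so $J$ satisfies the required formula.

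Next I would prove that $J$ is isometric by establishing the reverse inequality $\norm{Ju}\ge\norm{u}_\api$. Fix a positive bilinear map $\varphi\colon E\times F\to G$ with $\norm{\varphi}\le1$ and a functional $g^*\in G^*$ with $\norm{g^*}\le1$. Writing $g^*=g^*_+-g^*_-$, each of $g^*_\pm\circ\varphi$ is a positive bilinear form on $E\times F$ of norm at most $1$ and so corresponds to a positive operator $E\to F^*$; their difference is an operator $T\in L^r(E,F^*)$ with $\langle u,T\rangle=g^*(\hat\varphi(u))$, and $\abs{T}$ is dominated by the positive operator attached to $\abs{g^*}\circ\varphi$, whence $\norm{T}_r\le1$. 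Consequently $\norm{Ju}=\sup\{\abs{\langle u,T\rangle}\mid\norm{T}_r\le1\}\ge\sup\{\abs{g^*(\hat\varphi(u))}\mid\norm{g^*}\le1\}=\norm{\hat\varphi(u)}$, and taking the supremum over all admissible $\varphi$ yields $\norm{Ju}\ge\norm{u}_\api$. Hence $J$ is isometric on $E\otimes F$ and extends to an isometric embedding of $E\ftp F$, whose range, being the isometric image of a complete space, is closed.

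Finally I would show that $J$ is a lattice homomorphism, which by Remark~\ref{phi-tensor} is equivalent to $\psi$ being a lattice bimorphism, i.e. $\abs{\psi(x,y)}=\psi(\abs{x},\abs{y})$. By the Riesz--Kantorovich formula for the modulus in the dual lattice, $\abs{\psi(x,y)}(T)=\sup\{\langle Sx,y\rangle\mid\abs{S}\le T\}$ for $T\ge0$. The inequality $\le$ is routine, since $\abs{Sx}\le\abs{S}\,\abs{x}$ gives $\abs{\langle Sx,y\rangle}\le\langle\abs{S}\,\abs{x},\abs{y}\rangle\le\langle T\abs{x},\abs{y}\rangle=\psi(\abs{x},\abs{y})(T)$. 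The reverse inequality is the main obstacle: one must produce operators $S$ with $\abs{S}\le T$ for which $\langle Sx,y\rangle$ approaches $\langle T\abs{x},\abs{y}\rangle$. The prototype is the finite-dimensional case, where $S=\diag(\sign y)\circ T\circ\diag(\sign x)$ satisfies $\abs{S}=T$ and $\langle Sx,y\rangle=\langle T\abs{x},\abs{y}\rangle$ exactly. In general the multiplications by $\sign x$ and $\sign y$ are orthomorphisms on the Dedekind completions rather than on $E$ and $F$ themselves; approximating them by simple orthomorphisms via Freudenthal's spectral theorem and exploiting the Dedekind completeness of $F^*$ on the range side, I would obtain, for each $\varepsilon>0$, an admissible $S$ with $\langle Sx,y\rangle>\langle T\abs{x},\abs{y}\rangle-\varepsilon$. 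Granting this, $\psi$ is a lattice bimorphism, $J$ is a lattice homomorphism, and its closed range is the desired closed sublattice of $L^r(E,F^*)^*$.
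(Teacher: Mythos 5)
Your first two steps are sound: the map $\psi$, the induced operator $J=\psi^\otimes$ with $\norm{J}\le 1$, and the reverse estimate obtained by decomposing $g^*=g^*_+-g^*_-$ and dominating $\abs{T}$ by the positive operator attached to $\abs{g^*}\circ\varphi$ are all correct, and in substance this recovers the same duality the paper exploits. But your third step is a genuine gap, and it sits exactly at the center of the proposition. The inequality $\abs{\psi(x,y)}\ge\psi\bigl(\abs{x},\abs{y}\bigr)$, i.e.\ the claim that for $0\le T\in L^r(E,F^*)$ one has $\sup\bigl\{\langle Sx,y\rangle\mid\abs{S}\le T\bigr\}\ge\langle T\abs{x},\abs{y}\rangle$, is equivalent to the proposition itself on elementary tensors, and you explicitly write ``granting this'' rather than proving it. Moreover, the sketch you offer would not work as stated: $\diag(\sign x)$ is not an operator on $E$ (nor $\diag(\sign y)$ on $F$), and Freudenthal's theorem approximates $\sign x$ by combinations of \emph{components} of $\abs{x}$, i.e.\ by band projections which likewise need not act on $E$ or $F$; passing to Dedekind completions changes the domain of $T$, and you never verify that your approximants $S$ satisfy $\abs{S}\le T$ as operators from $E$ to $F^*$.

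The gap is fillable, and more cheaply than your sketch suggests, by moving the ``signs'' onto $T$ and onto $F^*$, where Dedekind completeness is actually available. For $0\le T$ and $u\in E_+$ set $T_{x_\pm}u=\sup_n T(u\wedge nx_\pm)$; the supremum exists in the Dedekind complete lattice $F^*$, the maps extend linearly, and since $u\wedge nx_++u\wedge nx_-=u\wedge n(x_++x_-)$ (as $x_+\wedge x_-=0$) one gets $T_{x_+}+T_{x_-}\le T$, while $T_{x_\pm}x_\pm=Tx_\pm$ and $T_{x_\pm}x_\mp=0$. Hence $S_1:=T_{x_+}-T_{x_-}$ satisfies $\abs{S_1}\le T$ and $S_1x=T\abs{x}$ exactly. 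The identical construction with $y_\pm$ produces $R$ on $F^*$ with $\abs{R}\le I$ and $\langle Rf,y\rangle=\langle f,\abs{y}\rangle$ for $f\ge 0$, so $S=RS_1$ gives $\abs{S}\le T$ and $\langle Sx,y\rangle=\langle T\abs{x},\abs{y}\rangle$ with no $\varepsilon$ at all. You should also be aware that the paper avoids this computation entirely: it shows that $h\mapsto T$, $\langle Tx,y\rangle=h(x\otimes y)$, is a bijective isometry between the positive cones of $(E\ftp F)^*$ and $L^r(E,F^*)$, upgrades it to a lattice isometry $(E\ftp F)^*\cong L^r(E,F^*)$ via \cite[Theorem~2.15]{Aliprantis:06} and $\alpha\bigl(\abs{h}\bigr)=\abs{T}$, and then embeds $E\ftp F$ canonically into $(E\ftp F)^{**}\cong L^r(E,F^*)^*$, so the lattice-homomorphism property of your $J$ comes for free from the bidual embedding (and one gets the stronger dual identification as a bonus). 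Until you supply the component argument above (or switch to the dual route), your proof is incomplete at its decisive point.
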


\begin{proof}
  Consider the map $\alpha\colon h\in(E\ftp F)^*\mapsto T\in L(E,F^*)$
  via $\langle Tx,y\rangle=h(x\otimes y)$. It is easy to see that
  $\alpha$ is one-to one and $T\ge 0$ whenever $h\ge 0$. It follows
  that $\alpha(h)$ is regular for every~$h$.

  Suppose that $0\le T\colon E\to F^*$. The map $\varphi$ defined by
  $\varphi(x,y)=\langle Tx,y\rangle$ is a positive bilinear functional
  on $E\times F$. Also,
  \begin{displaymath}
  \norm{T}
  =\sup\bigl\{\bigabs{\langle Tx,y\rangle}\mid
    \norm{x}\le 1,\,\norm{y}\le 1\bigr\}
   =\sup\bigl\{\bigabs{\varphi(x,y)}\mid
    \norm{x}\le 1,\,\norm{y}\le 1\bigr\}
   =\norm{\varphi}. 
  \end{displaymath}
  By Remark~\ref{phi-tensor}, we can consider $h=\varphi^{\otimes}$,
  then $0\le h\in(E\ftp F)^*$ and $\norm{h}=\norm{\varphi}=\norm{T}$.
  It is easy to see that $T=\alpha(h)$. Hence, the restriction of
  $\alpha$ to the positive cones of $(E\ftp F)^*$ is a bijective
  isometry onto the positive cone of $L(E,F^*)$. It follows by
  \cite[Theorem~2.15]{Aliprantis:06} that $\alpha$ is a latice
  isomorphism between $(E\ftp F)^*$ and $L^r(E,F^*)$. Moreover, if
  $T=\alpha(h)$ for some $h\in(E\ftp F)^*$ then
  $\alpha\bigl(\abs{h}\bigr)=\abs{T}$ yields
  \begin{math}
  \norm{h}=\bignorm{\abs{h}}=\bignorm{\abs{T}}=\norm{T}_r. 
  \end{math}
  It follows that $\alpha$ is a lattice isometry between $(E\ftp F)^*$
  and $L^r(E,F^*)$.  Therefore, $(E\ftp F)^{**}$ is lattice isometric to
  $L^r(E,F^*)^*$.  Since $E\ftp F$ can be viewed as a sublattice of
  $(E\ftp F)^{**}$, it is lattice isometric to a closed sublattice of
  $L^r(E,F^*)^*$.
\end{proof}

\subsection{Functional calculus}
\label{FC}

Given $x$ and $y$ in a Banach lattice~$E$, one would like to
define expressions like $(x^2+y^2)^\frac{1}{2}$ and $\xyroot$ to be
elements of~$E$. This can be done point-wise if $E$ can be represented
as a function space. One could object, however, that the definition
may then depend on the choice of a functional representation.
Theorem~1.d.1 in~\cite{Lindenstrauss:79} (see also \cite{Buskes:91})
proves that there is a unique way to extend all continuous
homogeneous%
\footnote{Recall that a function $f\colon\mathbb R^n\to R$ is called
  \term{homogeneous} if $f(\lambda x_1,\dots,\lambda x_n)=\lambda
  f(x_1,\dots,x_n)$ for any $x_1,\dots,x_n$, and $\lambda\ge 0$.}
functions from $\mathbb R^n$ to $\mathbb R$ to functions from $E^n$ to
$E$ which does not depend on a particular representation of $E$ as a
function space. More precisely, for any $x_1,\dots,x_n\in E$ there
exists a unique lattice homomorphism $\tau$ from the space of all
continuous homogeneous functions on $\mathbb R^n$ to $E$ such that if
$f(t_1,\dots,t_n)=t_i$ then $\tau(f)=x_i$ as $i=1,\dots,n$. We denote
$\tau(f)$ by $f(x_1,\dots,x_n)$. In particular, all identities and
inequalities for homogeneous expressions that are valid in $\mathbb R$
remain valid in~$E$. For example,
\begin{equation}\label{distr}
 (x_1+x_2)^{\frac{1}{2}}y^{\frac{1}{2}}=
 \bigl((x_1^{\frac{1}{2}}y^{\frac{1}{2}})^2+
   (x_2^{\frac{1}{2}}y^{\frac{1}{2}})^2\bigr)^{\frac{1}{2}}
\end{equation}
for every $x_1$, $x_2$, and $y$ in every Banach lattice~$E$.  Note
that, following convention from~\cite[p.~53]{Lindenstrauss:79}, for
$t\in\mathbb R$ and $p>0$, by $t^p$ we mean $\abs{t}^p\sign t$. There
is a certain inconsistency in notation: for example, $t^2$ equals
$t\abs{t}$, not $tt$, so that $(x^2)^{\frac{1}{2}}=x$ while
$(xx)^{\frac{1}{2}}=\abs{x}$. To avoid confusion, we will distinguish
$xx$ from $x^2$ throughout the paper.  Note also that
\begin{equation}
  \label{sq-abs}
  x^{\frac{1}{2}}\abs{x}^{\frac{1}{2}}=x.
\end{equation}

In the following lemma, we collect several standard facts that we will
routinely use. 

\begin{lemma}\label{calc-facts}
  Given any $x,y\in E$ and $p>0$.
  \begin{enumerate}
  \item\label{roots-abs} $\bigabs{\xyroot}
     =\abs{x}^{\frac{1}{2}}\abs{y}^{\frac{1}{2}}$;
  \item\label{roots-prod} $\bignorm{\xyroot}\le
    \norm{x}^{\frac{1}{2}}\norm{y}^{\frac{1}{2}}$;
  \item\label{perp-prod-0}
  If $x\perp y$ then $x^{\xyroot}=0$;
  \item\label{cone-E2} If $x,y\ge 0$  then
    $(x^p+y^p)^{\frac{1}{p}}\ge 0$;
  \item\label{perp-osum}
  If $x\wedge y=0$ then
  \begin{math}
    \bigl(x^p+y^p\bigr)^{\frac{1}{p}}
    =x+y.
  \end{math}
  \end{enumerate}
\end{lemma}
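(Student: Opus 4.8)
The plan is to treat all five items as instances of the functional calculus set up above, verifying in each case the underlying scalar statement and transferring it to $E$; the only genuine subtlety is the transfer of the \emph{conditional} (disjointness) identities in (iii) and (v).

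First, (i) is immediate: the identity $\bigabs{s^{\frac12}t^{\frac12}}=\abs{s}^{\frac12}\abs{t}^{\frac12}$ holds for all real $s,t$, both sides are continuous and homogeneous, so it remains valid in $E$. For (ii) I would start from (i), so that $\bignorm{\xyroot}=\bignorm{\abs{x}^{\frac12}\abs{y}^{\frac12}}$, and then transfer the elementary Young inequality $\abs{s}^{\frac12}\abs{t}^{\frac12}\le\frac12\bigl(\lambda\abs{s}+\lambda^{-1}\abs{t}\bigr)$, valid for every fixed $\lambda>0$ and all real $s,t$. This gives $0\le\abs{x}^{\frac12}\abs{y}^{\frac12}\le\frac12\bigl(\lambda\abs{x}+\lambda^{-1}\abs{y}\bigr)$ in $E$; taking norms and using monotonicity of the lattice norm together with the triangle inequality yields $\bignorm{\xyroot}\le\frac12\bigl(\lambda\norm{x}+\lambda^{-1}\norm{y}\bigr)$, and minimizing the right-hand side over $\lambda>0$ produces $\norm{x}^{\frac12}\norm{y}^{\frac12}$.

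For (iii) I would avoid any conditional transfer by using a genuinely unconditional identity: since $(\abs{s}\wedge\abs{t})(\abs{s}\vee\abs{t})=\abs{s}\abs{t}$ for all reals, taking square roots gives the homogeneous identity $\abs{s}^{\frac12}\abs{t}^{\frac12}=(\abs{s}\wedge\abs{t})^{\frac12}(\abs{s}\vee\abs{t})^{\frac12}$, which therefore holds in $E$. Combined with (i), $\bigabs{\xyroot}=(\abs{x}\wedge\abs{y})^{\frac12}(\abs{x}\vee\abs{y})^{\frac12}$; since $x\perp y$ means $\abs{x}\wedge\abs{y}=0$, part (ii), applied with $\abs{x}\wedge\abs{y}$ and $\abs{x}\vee\abs{y}$ in place of $x$ and $y$, bounds the norm of the right-hand side by $\norm{\abs{x}\wedge\abs{y}}^{\frac12}\norm{\abs{x}\vee\abs{y}}^{\frac12}=0$, so $\xyroot=0$. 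For (iv), when $x,y\ge0$ we have $\abs{x}=x$ and $\abs{y}=y$, hence $(x^p+y^p)^{\frac1p}=(\abs{x}^p+\abs{y}^p)^{\frac1p}$; the latter is the image under the (positive) functional calculus of the everywhere-nonnegative homogeneous function $(s,t)\mapsto(\abs{s}^p+\abs{t}^p)^{\frac1p}$, so it is $\ge0$.

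The main obstacle is (v), because the scalar equality $(s^p+t^p)^{\frac1p}=s+t$ holds only on the set $\{s\wedge t=0\}$ and, for $0<p<1$, the discrepancy $(s^p+t^p)^{\frac1p}-(s+t)$ near the axes decays only like $(s\wedge t)^p$, so it cannot be dominated by any fixed multiple of $s\wedge t$; thus the clean factorization trick used for (iii) is unavailable. I would instead invoke the representation built into the functional calculus theorem: the elements $x$, $y$, $(x^p+y^p)^{\frac1p}$, and $x+y$ all lie in the closed sublattice generated by $x$ and $y$, which is lattice isometric to a sublattice of some $C(K)$ on which the calculus is realized as pointwise composition. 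In that representation $x\wedge y=0$ forces $x$ and $y$ to have disjoint supports, so at each point one of them vanishes and the scalar identity $(x^p+y^p)^{\frac1p}=x+y$ holds pointwise; representation-independence of the calculus then returns the identity to $E$. Alternatively one can stay representation-free by establishing a H\"older-type bound $\bignorm{h(x,y)}\le C\,\norm{\abs{x}\wedge\abs{y}}^{\varepsilon}\norm{\abs{x}\vee\abs{y}}^{1-\varepsilon}$ for the homogeneous remainder $h=(s^p+t^p)^{\frac1p}-(s+t)$ with $\varepsilon>0$ small enough, and letting the disjointness $\abs{x}\wedge\abs{y}=0$ annihilate the right-hand side exactly as in (iii).
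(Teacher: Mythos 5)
Your treatments of (i)--(iv) are correct and essentially coincide with the paper's: (i) and (iv) are verbatim the paper's arguments; your (iii) uses exactly the paper's factorization $\bigabs{\xyroot}=(\abs{x}\wedge\abs{y})^{\frac{1}{2}}(\abs{x}\vee\abs{y})^{\frac{1}{2}}$, and your explicit norm step via (ii) merely fills in what the paper leaves unsaid; in (ii) you reprove, via the parametrized Young inequality $\abs{s}^{\frac{1}{2}}\abs{t}^{\frac{1}{2}}\le\frac{1}{2}(\lambda\abs{s}+\lambda^{-1}\abs{t})$ and optimization over $\lambda$, the estimate that the paper simply cites as Proposition~1.d.2(i) of Lindenstrauss--Tzafriri --- a legitimate, self-contained substitute.

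The real divergence is (v), where you missed the paper's trick while being right about a subtlety the paper glosses over. The paper avoids conditional transfer by sandwiching: the \emph{unconditional} scalar inequalities $\abs{s}\vee\abs{t}\le(\abs{s}^p+\abs{t}^p)^{\frac{1}{p}}\le\abs{s}+\abs{t}$ transfer to $E$, and $x\wedge y=0$ forces $x,y\ge 0$ and $x\vee y=x+y$, so the two bounds collapse to $x+y$ --- two lines, no representation and no remainder estimate, so the ``clean trick'' you declared unavailable does exist. However, the right-hand scalar inequality holds only for $p\ge 1$ (for $0<p<1$ it reverses), so the paper's proof as written covers only $p\ge 1$ even though the lemma asserts $p>0$; your remainder route (option B), with the bound $\bigabs{(s^p+t^p)^{\frac{1}{p}}-(s+t)}\le C_p(\abs{s}\wedge\abs{t})^{\varepsilon}(\abs{s}\vee\abs{t})^{1-\varepsilon}$ for $\varepsilon=\min(p,1)$ together with the $\varepsilon$-weighted version of your Young estimate, is sound and does cover all $p>0$. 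Your option A, by contrast, contains a genuine misstatement: the closed sublattice generated by $x$ and $y$, in the ambient norm, is in general \emph{not} lattice isometric to a sublattice of a $C(K)$ --- closed sublattices of $C(K)$ are AM-spaces, which two-generated closed sublattices of, say, $L_1$ typically are not. The standard repair, and the way the calculus is actually constructed in Lindenstrauss--Tzafriri, is to pass to the principal ideal generated by $u=\abs{x}\vee\abs{y}$ equipped with its order-unit norm, which Kakutani's theorem identifies with a $C(K)$ on which the calculus is pointwise; since the inclusion of this ideal into $E$ is a lattice homomorphism, the pointwise identity transfers back to $E$. With that fix, option A also works for all $p>0$.
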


\begin{proof}
  \eqref{roots-abs} follows from the fact that the identity holds for
  real numbers.

  \eqref{roots-prod}
  By Proposition~1.d.2(i) from~\cite{Lindenstrauss:79}, we have
  \begin{math}
    \bignorm{\abs{x}^{\frac{1}{2}}\abs{y}^{\frac{1}{2}}}\le
    \norm{x}^{\frac{1}{2}}\norm{y}^{\frac{1}{2}}.
  \end{math}
  Combining this with~\eqref{roots-abs}, we get the required inequality.

  \eqref{perp-prod-0} follows from the fact that
  \begin{math}
    \bigabs{\xyroot}=
    \bigl(\abs{x}\vee\abs{y}\bigr)^{\frac{1}{2}}
    \bigl(\abs{x}\wedge\abs{y}\bigr)^{\frac{1}{2}}.
  \end{math}

  \eqref{cone-E2} 
  Note that 
  \begin{math}
    \bigl(\abs{x}^p+\abs{y}^p\bigr)^{\frac{1}{p}}\ge 0
  \end{math}
  for every $x,y\in E$ because this inequality is true for real numbers. It
  follows that if $x,y\ge 0$ then
  \begin{math}
    (x^p+y^p)^{\frac{1}{p}}
    =\bigl(\abs{x}^p+\abs{y}^p\bigr)^{\frac{1}{p}}\ge 0.
  \end{math}

  \eqref{perp-osum}
  Again, for every $x,y\in E$ we have
  \begin{math}
    \abs{x}\vee\abs{y}\le\bigl(\abs{x}^p+\abs{y}^p\bigr)^{\frac{1}{p}}
    \le\abs{x}+\abs{y}
  \end{math}
  because this is true for real numbers. But if $x\wedge y=0$ then
  $x,y\ge 0$ and $x\vee y=x+y$.
\end{proof}

A Banach lattice $E$ is said to be $p$-\term{convex} for some $1\le p<\infty$
if there is a constant $M>0$ such that
\begin{math}
 \bignorm{\bigl(\sum_{i=1}^nx_i^p\bigr)^{\frac{1}{p}}}
 \le M\bigl(\sum_{i=1}^n\norm{x_i}^p\bigr)^{\frac{1}{p}}
\end{math}
whenever $x_1,\dots,x_n\in E_+$. Similarly, $E$ is $p$-\term{concave}
if  there is a constant $M>0$ such that
\begin{math}
 \bignorm{\bigl(\sum_{i=1}^nx_i^p\bigr)^{\frac{1}{p}}}
 \ge\frac{1}{M}\bigl(\sum_{i=1}^n\norm{x_i}^p\bigr)^{\frac{1}{p}}
\end{math}
whenever $x_1,\dots,x_n\in E_+$.

A Banach lattice $E$ satisfies the \term{upper}
$p$-\term{estimate} with constant $M$ if
\begin{math}
  \bignorm{\sum_{k=1}^nx_k}
  \le M\bigl(\sum_{k=1}^n\norm{x_k}^p\bigr)^{\frac{1}{p}}
\end{math}
whenever $x_1,\dots,x_n$ are disjoint. Similarly,
$E$ satisfies the \term{lower}
$p$-\term{estimate} with constant $M$ if
\begin{math}
  \bignorm{\sum_{k=1}^nx_k}
  \ge\tfrac{1}{M}\bigl(\sum_{k=1}^n\norm{x_k}^p\bigr)^{\frac{1}{p}}
\end{math}
whenever $x_1,\dots,x_n$ are disjoint.  It follows from
Lemma~\ref{calc-facts}\eqref{perp-osum}
that $p$-convexity implies the
upper $p$-estimate and $p$-concavity implies the lower $p$-estimate.

\section{The concavification of a Banach lattice}

The concavification procedure is motivated by the fact that if
$(x_i)\in\ell_r$ and $1<p<r$, then the sequence $(x_i^p)$ belongs to
$\ell_{\frac{r}{p}}$.

This section is partially based on Section 1.d in~\cite{Lindenstrauss:79}.
Throughout this section, $E$ is a Banach lattice and $p\ge 1$.

We define new vector operations on $E$ via
\begin{math}
  x\oplus y=(x^p+y^p)^{\frac{1}{p}}
\end{math}
and
\begin{math}
  \alpha\odot x=\alpha^{\frac{1}{p}}x  
\end{math}
whenever $x,y\in E$ and $\alpha\in\mathbb R$. (Here again, if $x$,
$y$, or $\alpha$ are not positive then we use the convention described
earlier.) Note that $E$ endowed with these new addition and
multiplication operations and the original order is again a vector lattice by
Lemma~\ref{calc-facts}\eqref{cone-E2}.

Define
\begin{equation}\label{conv-norm}
  \norm{x}_{(p)}=\inf\Bigl\{\sum_{i=1}^n \norm{v_i}^p\mid
        \abs{x}\le v_1\oplus\dots\oplus v_n, v_i\ge 0\Bigr\}.
\end{equation}

\begin{remark}\label{conv-norm-eq}
  Note that being a vector lattice, $(E,\oplus,\odot,\le)$ satisfies
  the Riesz Decomposition Property (see, e.g., Theorem~1.13
  in~\cite{Aliprantis:06}), so that the inequality $\abs{x}\le
  v_1\oplus\dots\oplus v_n$ in~\eqref{conv-norm} can be replaced by
  equality.
\end{remark}

It is easy to see that~\eqref{conv-norm} defines a lattice semi-norm
on $(E,\oplus,\odot,\le)$. This semi-normed vector lattice will be
denoted by~$E_{(p)}$. It is called the \term{$p$-concavification}
of~$E$.  As a partially ordered set, $E_{(p)}$ coincides with~$E$.  We
will see in Examples~\ref{examp:Lp} and~\ref{examp:ellp} that
$\norm{\cdot}_{(p)}$ does not have to be a norm, and when it is a
norm, it need not be complete.

The following fact is standard, we include the proof for completeness.

\begin{proposition}\label{p-conv-BL}
  If $E$ is a $p$-convex Banach lattice then $E_{(p)}$ is a Banach lattice.
\end{proposition}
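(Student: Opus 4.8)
The plan is to prove two things: that $\norm{\cdot}_{(p)}$ is genuinely a norm (not merely a semi-norm), and that it is complete. Both follow from comparing $\norm{\cdot}_{(p)}$ with the original norm of $E$, and it is here that $p$-convexity enters decisively.

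First I would establish the two-sided estimate
\[
  \tfrac{1}{M^p}\norm{x}^p \le \norm{x}_{(p)} \le \norm{x}^p
  \qquad (x\in E),
\]
where $M$ is the $p$-convexity constant. The upper bound is immediate from the definition~\eqref{conv-norm} via the single-term decomposition $v_1=\abs{x}$. For the lower bound, take any admissible decomposition $\abs{x}=v_1\oplus\dots\oplus v_n$ with $v_i\ge 0$ (by Remark~\ref{conv-norm-eq} we may use equality); then $\abs{x}=(\sum_i v_i^p)^{\frac1p}$, so $p$-convexity gives $\norm{x}=\norm{\abs{x}}\le M(\sum_i\norm{v_i}^p)^{\frac1p}$, whence $\sum_i\norm{v_i}^p\ge\norm{x}^p/M^p$; taking the infimum yields the lower bound. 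In particular $\norm{x}_{(p)}=0$ forces $x=0$, so $\norm{\cdot}_{(p)}$ is a lattice norm on $(E,\oplus,\odot,\le)$ and $E_{(p)}$ is a normed lattice.

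For completeness I would use the standard criterion that a normed lattice is complete once every positive, absolutely summable series converges; so let $x_n\ge 0$ with $\sum_n\norm{x_n}_{(p)}<\infty$ and put $s_N=(\sum_{n=1}^N x_n^p)^{\frac1p}$, an increasing sequence whose $N$-th and $m$-th partial sums differ in the $\oplus$-structure by $(\sum_{n=N+1}^m x_n^p)^{\frac1p}=x_{N+1}\oplus\dots\oplus x_m$. The lower estimate above gives $\sum_n\norm{x_n}^p<\infty$. The crux of the matter, and the main obstacle, is to produce an honest limit lying in the underlying set $E$: the example $E=L_p$ shows that the ``linearized'' sums $\sum_n x_n^p$ need not converge in $E$, so one cannot simply sum the $p$-th powers. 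I would instead show that $(s_N)$ is Cauchy in the \emph{original} norm. Using the scalar inequality $a-b\le(a^p-b^p)^{\frac1p}$ for $a\ge b\ge 0$ and $p\ge 1$ (valid in $E$ by the functional calculus), one has $0\le s_m-s_N\le(\sum_{n=N+1}^m x_n^p)^{\frac1p}$ for $m>N$, so by monotonicity of the lattice norm and $p$-convexity
\[
  \norm{s_m-s_N}\le\Bignorm{\Bigl(\textstyle\sum_{n=N+1}^m x_n^p\Bigr)^{\frac1p}}
  \le M\Bigl(\textstyle\sum_{n>N}\norm{x_n}^p\Bigr)^{\frac1p},
\]
which tends to $0$ as $N\to\infty$. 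Since $E$ is complete, $s_N\to s$ for some $s\in E$, and $s=\sup_N s_N\ge s_N$.

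Finally I would transfer this back to $\norm{\cdot}_{(p)}$. Continuity of the functional calculus gives $(s_m^p-s_N^p)^{\frac1p}\to(s^p-s_N^p)^{\frac1p}$ in the original norm as $m\to\infty$; combined with the uniform bound $\norm{(s_m^p-s_N^p)^{\frac1p}}\le M(\sum_{n>N}\norm{x_n}^p)^{\frac1p}$ this yields $\norm{(s^p-s_N^p)^{\frac1p}}\le M(\sum_{n>N}\norm{x_n}^p)^{\frac1p}$. The upper estimate of the first step then gives $\norm{(s^p-s_N^p)^{\frac1p}}_{(p)}\le M^p\sum_{n>N}\norm{x_n}^p\to 0$, i.e.\ $s_N\to s$ in $E_{(p)}$. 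Thus every positive absolutely summable series converges and $E_{(p)}$ is complete, hence a Banach lattice. I expect the only delicate auxiliary points to be the reduction to positive summable series (routine, via positive and negative parts in the $\oplus$-structure) and the appeal to continuity of the functional calculus; the heart of the argument is the inequality $a-b\le(a^p-b^p)^{\frac1p}$, which is exactly what converts control of $\oplus$-tails into control of original-norm tails and lets completeness of $E$ supply the limit.
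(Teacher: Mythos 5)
Your proof is correct, and its first half --- the two-sided estimate $\tfrac{1}{M^p}\norm{x}^p\le\norm{x}_{(p)}\le\norm{x}^p$ --- is verbatim the paper's entire proof: the paper derives this estimate and then simply asserts that it ``yields that $\norm{\cdot}_{(p)}$ is a complete norm.'' Where you genuinely diverge is in substantiating completeness. This step is not purely formal, since $E_{(p)}$ and $E$ carry different linear structures, so norm comparability alone does not transfer completeness: one must relate $\oplus$-differences to ordinary differences, which needs functional-calculus inequalities either way (for the paper's compressed route, the H\"older continuity of the signed root, $\abs{a-b}\le 2^{1-\frac{1}{p}}\abs{a^p-b^p}^{\frac{1}{p}}$, in one direction, and $\abs{a^p-b^p}^{\frac{1}{p}}\le p^{\frac{1}{p}}\bigl(\abs{a}\vee\abs{b}\bigr)^{\frac{p-1}{p}}\abs{a-b}^{\frac{1}{p}}$ together with \cite[Proposition~1.d.2(i)]{Lindenstrauss:79} in the other). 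Your route through positive absolutely summable series organizes this cleanly: the superadditivity inequality $(a-b)^p+b^p\le a^p$ converts control of $\oplus$-tails into original-norm control of the increasing partial sums $s_N$, completeness of $E$ supplies the limit $s$, and the continuity of the calculus map $(u,v)\mapsto\bigl((u^p-v^p)\vee 0\bigr)^{\frac{1}{p}}$ (uniform on bounded sets, by the standard homogenization argument: $\abs{h(a)-h(b)}\le\varepsilon\max_i(\abs{a_i}\vee\abs{b_i})+K_\varepsilon\max_i\abs{a_i-b_i}$ pointwise, which transfers to $E$) identifies $\lim_m(s_m^p-s_N^p)^{\frac{1}{p}}$ with $s\ominus s_N$ and lets the uniform bound pass to the limit. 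The two auxiliary points you flag are real but standard, and your reduction to positive series is sound, since $y=y^+\ominus y^-$ in $E_{(p)}$ (using the signed-power convention) and $\norm{y^{\pm}}_{(p)}\le\norm{y}_{(p)}$. In sum: same key estimate as the paper, plus an explicit proof of the completeness step that the paper compresses into a single sentence; what your version buys is rigor at precisely the point the paper takes for granted, at the modest cost of invoking the (true) continuity properties of the homogeneous functional calculus.
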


\begin{proof}
  Suppose that $E$ is $p$-convex with constant~$M$. Given $x\in E$.
  Suppose that 
  \begin{displaymath}
      \abs{x}=v_1\oplus\dots\oplus v_n
    =\bigl(v_1^p+\dots+v_n^p\bigr)^{\frac{1}{p}}
  \end{displaymath}
  for some $v_i\ge 0$. Then
  \begin{math}
    \norm{x}
    \le M\Bigl(\sum_{i=1}^n \norm{v_i}^p\Bigr)^{\frac{1}{p}}.
  \end{math}
  It follows that
  $\frac{1}{M^p}\norm{x}^p\le\norm{x}_{(p)}\le\norm{x}^p$. This yields
  that $\norm{\cdot}_{(p)}$ is a complete norm on~$E_{(p)}$.
\end{proof}

Recall that if $E$ is a Banach lattice and $x>0$, then $x$ is an
\term{atom} in $E$ if $0\le z\le x$ implies that $z$ is a scalar
multiple of~$x$. We say that $E$ is \term{atomic} or \term{discrete}
if for every $z>0$ there exists an atom $x$ such that $0<x\le z$. 

\begin{lemma}\label{atom-norm}
  If $x$ is an atom in a Banach lattice $E$ then $\norm{x}_{(p)}=\norm{x}^p$
\end{lemma}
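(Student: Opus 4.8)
The plan is to prove the two inequalities $\norm{x}_{(p)}\le\norm{x}^p$ and $\norm{x}_{(p)}\ge\norm{x}^p$ separately; only the second will use that $x$ is an atom. The first is immediate and in fact holds for every $x\in E$: taking $n=1$ and $v_1=\abs{x}$ in~\eqref{conv-norm} gives $\abs{x}\le v_1$ with $v_1\ge0$ and $\norm{v_1}^p=\norm{x}^p$, whence $\norm{x}_{(p)}\le\norm{x}^p$.

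For the reverse inequality I would first extract the ``coordinate functional'' of the atom. Since $x>0$ is an atom, the band $B_x$ it generates coincides with $\Span\{x\}$: for $0\le z$ each $z\wedge nx$ is dominated by $nx$ and so equals $\mu_n x$ for some $\mu_n\ge0$, and the Archimedean property forces the increasing sequence $(\mu_n)$ to stay bounded, so that $\sup_n(z\wedge nx)=(\sup_n\mu_n)\,x$ exists in $E$. This makes $B_x$ a projection band; letting $P$ be the corresponding band projection and writing $Pz=f(z)\,x$ defines a functional $f\colon E\to\mathbb R$ with $f(x)=1$. Because $P$ is a band projection it is a lattice homomorphism and a contraction, so $f$ is a lattice homomorphism and $\abs{f(v)}\,\norm{x}=\norm{Pv}\le\norm{v}$ gives $\norm{f}=\tfrac1{\norm{x}}$; in particular $0\le f(v)\le\norm{v}/\norm{x}$ for every $v\ge0$.

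The crucial point is that, being a lattice homomorphism into $\mathbb R$, $f$ commutes with the functional calculus of Section~\ref{FC}: by the uniqueness in the functional calculus theorem, both $g\mapsto f\bigl(\tau(g)\bigr)$ and $g\mapsto g\bigl(f(v_1),\dots,f(v_n)\bigr)$ are lattice homomorphisms on the continuous homogeneous functions that agree on the coordinate functions, hence $f\bigl(g(v_1,\dots,v_n)\bigr)=g\bigl(f(v_1),\dots,f(v_n)\bigr)$ for all such $g$. Now suppose $x=\abs{x}\le v_1\oplus\dots\oplus v_n=\bigl(v_1^p+\dots+v_n^p\bigr)^{\frac1p}$ with $v_i\ge0$. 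Applying the positive (hence monotone) map $f$ and using this commutation yields $1=f(x)\le\bigl(f(v_1)^p+\dots+f(v_n)^p\bigr)^{\frac1p}$, so that $\sum_i f(v_i)^p\ge1$. Combining this with $f(v_i)^p\le\norm{v_i}^p/\norm{x}^p$ gives $\sum_i\norm{v_i}^p\ge\norm{x}^p$, and taking the infimum over all admissible representations delivers $\norm{x}_{(p)}\ge\norm{x}^p$.

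I expect the main obstacle to lie in the second paragraph, namely rigorously producing $f$: one must verify that an atom generates a one-dimensional projection band and that the associated functional is a contractive lattice homomorphism with $f(x)=1$. The Archimedean property is exactly what keeps the scalars $\mu_n$ bounded so that the relevant suprema exist. Once $f$ is in hand, the naturality of the functional calculus under lattice homomorphisms and the norm estimate $\norm{f}=1/\norm{x}$ make the remaining computation routine.
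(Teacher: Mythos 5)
Your proof is correct, but it takes a genuinely different route from the paper's. The paper's proof stays entirely inside the lattice $E_{(p)}$: using Remark~\ref{conv-norm-eq} (Riesz Decomposition) it restricts to equality representations $x=v_1\oplus\dots\oplus v_n$, observes that then $0\le v_k\le x$, so the atom property forces $v_k=\alpha_k\odot x$ with $\sum_k\alpha_k=1$, whence \emph{every} admissible sum equals exactly $\norm{x}^p$ --- a three-line computation with no duality, no band projections, and no appeal to uniqueness of the functional calculus. You instead prove the two inequalities separately and, for the lower bound, build the band projection onto the one-dimensional band generated by the atom, extract the biorthogonal functional $f$ with $f(x)=1$ and $\norm{f}=1/\norm{x}$, and push the estimate through $f$ using the (correct, standard) fact that real-valued lattice homomorphisms commute with the homogeneous functional calculus of Subsection~\ref{FC}, by the uniqueness clause of Theorem~1.d.1 in \cite{Lindenstrauss:79}. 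All the steps you flagged as delicate do go through: in a Banach lattice (hence Archimedean) the scalars $\mu_n$ in $z\wedge nx=\mu_n x$ stay bounded, $\sup_n(z\wedge nx)$ exists, so $B_x$ is a one-dimensional projection band, and the band projection is a contractive lattice homomorphism. What your approach buys is that it works directly with the inequality representations $\abs{x}\le v_1\oplus\dots\oplus v_n$ in \eqref{conv-norm}, so it does not need Remark~\ref{conv-norm-eq}, and it produces the coordinate functional $f$ as a reusable object (it gives $\norm{z}_{(p)}\ge f(\abs{z})^p\,\norm{x}^p$ for all $z$, which would also reprove Corollary~\ref{discr-norm}); the cost is substantially heavier machinery for a fact the paper dispatches by noting that an atom dominates only its own multiples.
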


\begin{proof}
  Take $v_1,\dots,v_n\in E_+$ such that $x=v_1\oplus\dots\oplus v_n$.
  It follows that $0\le v_k\le x$ for each $k=1,\dots,n$, hence
  $v_k=\alpha_k\odot x=\alpha_k^{1/p}x$ for some
  $\alpha_k\in\mathbb R_+$. Also,
  \begin{displaymath}
    x=v_1\oplus\dots\oplus v_n
    =(\alpha_1\odot x)\oplus\dots\oplus(\alpha_n\odot x)
    =(\alpha_1+\dots+\alpha_n)\odot x,
  \end{displaymath}
  so that $\sum_{k=1}^n\alpha_k=1$. It follows that
  \begin{math}
    \sum_{k=1}^n \norm{v_k}^p
    =\sum_{k=1}^n\bignorm{\alpha_k^{\frac{1}{p}}x}^p
    =\norm{x}^p,
  \end{math}
  so that $\norm{x}_{(p)}=\norm{x}^p$.
\end{proof}

\begin{corollary}\label{discr-norm}
  If $E$ is a discrete Banach lattice then $E_{(p)}$ is a normed lattice.
\end{corollary}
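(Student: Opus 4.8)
The plan is to show that the lattice semi-norm $\norm{\cdot}_{(p)}$ on $E_{(p)}$ is in fact a norm; the remaining lattice semi-norm axioms have already been recorded in the discussion preceding Proposition~\ref{p-conv-BL}, so it suffices to verify that $\norm{x}_{(p)}=0$ forces $x=0$. I would argue by contraposition: assuming $x\ne 0$, I would produce a strictly positive lower bound for $\norm{x}_{(p)}$.

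First I would use discreteness. Since $x\ne 0$ we have $\abs{x}>0$, and because $E$ is discrete there is an atom $a$ with $0<a\le\abs{x}$. Next I would invoke monotonicity of the lattice semi-norm, together with the observation that by~\eqref{conv-norm} the quantity $\norm{\cdot}_{(p)}$ depends only on the modulus, so that $\norm{x}_{(p)}=\norm{\,\abs{x}\,}_{(p)}$. From $0\le a\le\abs{x}$ and the fact that $\norm{\cdot}_{(p)}$ is a lattice semi-norm I then obtain $\norm{a}_{(p)}\le\norm{\,\abs{x}\,}_{(p)}=\norm{x}_{(p)}$.

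Finally I would apply Lemma~\ref{atom-norm} to the atom $a$: it yields $\norm{a}_{(p)}=\norm{a}^p$, which is strictly positive since $a>0$. Chaining the inequalities gives $\norm{x}_{(p)}\ge\norm{a}_{(p)}=\norm{a}^p>0$, contradicting $\norm{x}_{(p)}=0$. Hence $\norm{\cdot}_{(p)}$ separates points and $E_{(p)}$ is a normed lattice. I do not expect a serious obstacle here; the only points requiring care are the monotonicity of $\norm{\cdot}_{(p)}$ and that it is computed from $\abs{x}$, both of which are immediate from the definition~\eqref{conv-norm}. These facts are exactly what let me transfer the strictly positive value $\norm{a}^p$ supplied by Lemma~\ref{atom-norm} down to a lower bound for $\norm{x}_{(p)}$.
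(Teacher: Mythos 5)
Your proof is correct and follows essentially the same route as the paper's: reduce to showing the kernel of the lattice semi-norm $\norm{\cdot}_{(p)}$ is trivial, use discreteness to find an atom $a$ with $0<a\le\abs{x}$, and then combine monotonicity of the lattice semi-norm with Lemma~\ref{atom-norm} to get $\norm{x}_{(p)}\ge\norm{a}_{(p)}=\norm{a}^p>0$. Your write-up merely spells out the monotonicity and $\norm{x}_{(p)}=\norm{\,\abs{x}\,}_{(p)}$ steps that the paper leaves implicit.
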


\begin{proof}
  Since we know that $\norm{\cdot}_{(p)}$ is a lattice semi-norm
  on~$E_{(p)}$, it suffices to prove that it has trivial kernel.
  Suppose that $y\in E$ with $y\ne 0$. There is an atom $x$ such that
  $0<x\le\abs{y}$. Then
  $\norm{y}_{(p)}\ge\norm{x}_{(p)}=\norm{x}^p>0$.
\end{proof}

\begin{remark}\label{Ep-normed}
  Thus, we know that $E_{(p)}$ is a normed lattice in two important
  special cases: when $E$ is discrete or $p$-convex. It would be
  interesting to find a general characterization of Banach lattices
  $E$ for which $\norm{\cdot}_{(p)}$ is a norm. That is, characterize
  all Banach lattices $E$ such that
  \begin{displaymath}
    \inf\Bigl\{\sum_{i=1}^n \norm{v_i}^p\mid
        x=\bigl(v_1^p+\dots+v_n^p\bigr)^{\frac{1}{p}},
          v_i> 0\Bigr\}>0
  \end{displaymath}
  for every non-zero $x\in E_+$.
\end{remark}

In  general, we can only say that $\norm{\cdot}_{(p)}$ is a
lattice seminorm on~$E_{(p)}$. It follows that its kernel is an ideal,
so that the quotient space $E_{(p)}/\ker\norm{\cdot}_{(p)}$ is a
normed lattice. Denote its completion by~$E_{[p]}$. Clearly, $E_{[p]}$
is a Banach lattice.

\medskip

Let $E$ be a Banach lattice. It is a standard fact (c.f., the proof of
\cite[Lemma~1.b.13]{Lindenstrauss:79}) that if there exists $c>0$ such
that
\begin{math}
    \bignorm{\sum_{k=1}^nx_k}\ge c\sum_{k=1}^n\norm{x_k}
\end{math}
whenever $x_1,\dots,x_n$ are disjoint (that is, if $E$ satisfies the
lower 1-estimate), then $E$ is lattice isomorphic to an $AL$-space.
Indeed, put
\begin{displaymath}
    \trinorm{x}
    =\sup\Bigl\{\sum_{i=1}^n\norm{x_i}\mid
    x_1,\dots,x_n\text{ are positive and disjoint and }\abs{x}=x_1+\dots+x_n\Bigr\}.
\end{displaymath}
It can be easily verified that this is an equivalent norm on $E$ which
makes $E$ into an AL-space (with the same order).

The following lemma establishes that if $E$ satisfies the lower
  $p$-estimate then $E_{(p)}$ satisfies the lower 1-estimate.

\begin{lemma}\label{p-est-norm}
  Suppose that $E$ is a Banach lattice satisfying the lower
  $p$-estimate with constant~$M$. Then
  \begin{math}
    \bignorm{\sum_{k=1}^nx_k}_{(p)}
    \ge\frac{1}{M^p}\sum_{k=1}^n\norm{x_k}_{(p)}
  \end{math}
  whenever $x_1,\dots,x_n$ are disjoint in~$E$.
\end{lemma}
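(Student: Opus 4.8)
The plan is to fix pairwise disjoint $x_1,\dots,x_n\in E$, put $x=\sum_{k=1}^n x_k$, and prove the equivalent inequality $\sum_{k=1}^n\norm{x_k}_{(p)}\le M^p\norm{x}_{(p)}$. By Remark~\ref{conv-norm-eq} it suffices to fix an arbitrary representation $\abs{x}=v_1\oplus\dots\oplus v_m$ with $v_j\ge 0$ and to bound $\sum_k\norm{x_k}_{(p)}$ by $M^p\sum_{j=1}^m\norm{v_j}^p$; taking the infimum over all such representations then finishes the proof. Since the $x_k$ are disjoint, $\abs{x}=\sum_{k=1}^n\abs{x_k}$ with the $\abs{x_k}$ pairwise disjoint, and Lemma~\ref{calc-facts}\eqref{perp-osum} applied to the $p$-th powers gives $\abs{x}^p=\sum_{k=1}^n\abs{x_k}^p$ with disjoint summands.

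The central step is to split each $v_j$ along the disjoint bands carrying the $x_k$. Since $0\le v_j\le\abs{x}$ we have $0\le v_j^p\le\abs{x}^p=\sum_k\abs{x_k}^p$, so the Riesz Decomposition Property yields $v_j^p=\sum_{k=1}^n w_j^{(k)}$ with $0\le w_j^{(k)}\le\abs{x_k}^p$. Because $\abs{x_1}^p,\dots,\abs{x_n}^p$ are pairwise disjoint, the components $w_j^{(k)}$ are pairwise disjoint in $k$, and a short uniqueness argument for decompositions into disjoint bands shows $\sum_{j=1}^m w_j^{(k)}=\abs{x_k}^p$ for each $k$. Setting $v_j^{(k)}=\bigl(w_j^{(k)}\bigr)^{\frac{1}{p}}$ and using that the functional calculus preserves disjointness, the $v_j^{(k)}$ are again pairwise disjoint in $k$; from $\sum_k\bigl(v_j^{(k)}\bigr)^p=v_j^p$ and $\sum_j\bigl(v_j^{(k)}\bigr)^p=\abs{x_k}^p$ together with Lemma~\ref{calc-facts}\eqref{perp-osum} we obtain $v_j^{(1)}\oplus\dots\oplus v_j^{(n)}=v_j$ and $v_1^{(k)}\oplus\dots\oplus v_m^{(k)}=\abs{x_k}$.

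The last identity is a legitimate representation of $\abs{x_k}$, so $\norm{x_k}_{(p)}\le\sum_{j=1}^m\bignorm{v_j^{(k)}}^p$. Summing over $k$ and interchanging the two finite sums gives $\sum_k\norm{x_k}_{(p)}\le\sum_{j=1}^m\sum_{k=1}^n\bignorm{v_j^{(k)}}^p$. For each fixed $j$ the family $v_j^{(1)},\dots,v_j^{(n)}$ is disjoint, so the lower $p$-estimate yields $\sum_{k=1}^n\bignorm{v_j^{(k)}}^p\le M^p\bignorm{\sum_{k=1}^n v_j^{(k)}}^p$; since $\sum_k v_j^{(k)}=v_j$ this equals $M^p\norm{v_j}^p$. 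Summing over $j$ gives $\sum_k\norm{x_k}_{(p)}\le M^p\sum_{j=1}^m\norm{v_j}^p$, which is exactly what the reduction in the first paragraph requires.

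The step I expect to be the main obstacle is the band splitting in the second paragraph: in an arbitrary Banach lattice one cannot appeal to a fixed functional representation or to band projections, so the decomposition $v_j^p=\sum_k w_j^{(k)}$ must be produced abstractly from the Riesz Decomposition Property and then pinned down using disjointness (the identity $\sum_j w_j^{(k)}=\abs{x_k}^p$ and the preservation of disjointness under $t\mapsto t^{1/p}$). Once this representation-free splitting is in place, the remainder is a routine combination of the lower $p$-estimate, monotonicity of the lattice norm, and the functional-calculus identities from Lemma~\ref{calc-facts}.
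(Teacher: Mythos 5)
Your overall strategy is the paper's own: fix an arbitrary representation of $\abs{x}$, split it along the disjoint carriers of the $x_k$ by a Riesz decomposition, apply the lower $p$-estimate to each resulting disjoint family, and sum up; the reduction in your first paragraph and the estimate chain in your third are exactly right. The one genuine defect is foundational: in an abstract Banach lattice the standalone elements $v_j^p$, $\abs{x_k}^p$ and the roots $\bigl(w_j^{(k)}\bigr)^{\frac{1}{p}}$ are not defined. The functional calculus of Subsection~\ref{FC} produces only \emph{degree-one} homogeneous expressions such as $(x^p+y^p)^{\frac{1}{p}}$ or $\xyroot$; the map $t\mapsto t^p$ by itself is $p$-homogeneous, and any attempt to define $v^p$ through a Kakutani representation of a principal ideal as $C(K)$ yields an element that depends on the choice of the order unit, so it has no intrinsic meaning. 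Consequently the sentence ``the Riesz Decomposition Property yields $v_j^p=\sum_{k}w_j^{(k)}$'' invokes RDP in a space where $v_j^p$ does not exist --- and this is precisely the step you yourself flagged as the main obstacle, having (rightly) forbidden a fixed functional representation.

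The repair is mechanical, and it is exactly the paper's formulation: every one of your power identities is, under the formal substitution $a^p\leftrightarrow a$, a statement about the vector lattice $E_{(p)}=(E,\oplus,\odot,\le)$, which has the same order --- hence the same absolute value, disjointness, and bands --- as $E$. Your hypothesis $0\le v_j^p\le\sum_k\abs{x_k}^p$ becomes $0\le v_j\le\abs{x_1}\oplus\dots\oplus\abs{x_n}$, and RDP applied \emph{in $E_{(p)}$} gives $v_j=v_j^{(1)}\oplus\dots\oplus v_j^{(n)}$ with $0\le v_j^{(k)}\le\abs{x_k}$; your uniqueness argument for decompositions into disjoint bands then transposes verbatim to $\oplus$-sums and recovers $\abs{x_k}=v_1^{(k)}\oplus\dots\oplus v_m^{(k)}$, which is what licenses $\norm{x_k}_{(p)}\le\sum_{j=1}^m\bignorm{v_j^{(k)}}^p$. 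The paper shortcuts even this: it applies the two-sided (matrix) Riesz decomposition \cite[Theorem~1.20]{Aliprantis:06} in $E_{(p)}$ to the equality $x_1\oplus\dots\oplus x_n=u_1\oplus\dots\oplus u_m$, obtaining row and column $\oplus$-sums simultaneously, so no uniqueness argument is needed at all. With that one substitution in place, the rest of your proof (disjointness of $v_j^{(1)},\dots,v_j^{(n)}$ from $v_j^{(k)}\le\abs{x_k}$, the identity $\sum_k v_j^{(k)}=v_j$ via Lemma~\ref{calc-facts}\eqref{perp-osum}, the lower $p$-estimate for each fixed $j$, and the final interchange of sums followed by the infimum via Remark~\ref{conv-norm-eq}) is correct and coincides with the paper's argument.
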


\begin{proof}
  Suppose $x_1,\dots,x_n$ are disjoint in~$E$.
  Since
  \begin{math}
    \bigabs{\sum_{k=1}^nx_k}
    =\sum_{k=1}^n\abs{x_k},
  \end{math}
  we may assume without loss of generality that $x_k\ge 0$ for each~$k$.
  Note that
  \begin{math}
    \sum_{k=1}^nx_k
    =x_1\oplus\dots\oplus x_n
  \end{math}
  by Lemma~\ref{calc-facts}\eqref{perp-osum}.

  We will use~\eqref{conv-norm} and Remark~\ref{conv-norm-eq} to
  estimate $\bignorm{x_1\oplus\dots\oplus x_n}_{(p)}$.  Take
  $u_1,\dots,u_m$ in $E_+$ such that $x_1\oplus\dots\oplus
  x_n=u_1\oplus\dots\oplus u_m$. Since $E_{(p)}$ is a vector lattice,
  by the Riesz Decomposition Property \cite[Theorem~1.20]{Aliprantis:06},
  for each $k=1,\dots,n$ we find $v_{k,1},\dots,v_{k,m}$ in $E_+$ such
  that $x_k=v_{k,1}\oplus\dots\oplus v_{k,m}$ and
  $u_i=v_{1,i}\oplus\dots\oplus v_{n,i}$ for each $i=1,\dots,m$. For
  each $k$ and $i$ we have $0\le v_{k,i}\le x_k$, so that
  $v_{1,i},\dots,v_{n,i}$ are disjoint for every $i$. It follows that
  $u_i=v_{1,i}+\dots+ v_{n,i}$. By the lower $p$-estimate, we get
  \begin{math}
    \norm{u_i}
    \ge\frac{1}{M}\bigl(\sum_{k=1}^n\norm{v_{k,i}}^p)^{\frac{1}{p}},
  \end{math}
  so that
  \begin{math}
    M^p\norm{u_i}^p
    \ge\sum_{k=1}^n\norm{v_{k,i}}^p.
  \end{math}
  For every~$k$, we have
  \begin{math}
    \norm{x_k}_{(p)}\le\sum_{i=1}^m\norm{v_{k,i}}^p,    
  \end{math}
  so that
  \begin{displaymath}
    \sum_{k=1}^n\norm{x_k}_{(p)}
    \le\sum_{k=1}^n\sum_{i=1}^m\norm{v_{k,i}}^p
    \le M^p\sum_{i=1}^m\norm{u_i}^p.
  \end{displaymath}
  Taking the infimum over all $u_1,\dots,u_m$ in $E_+$ such that $x_1\oplus\dots\oplus
  x_n=u_1\oplus\dots\oplus u_m$, we get the required inequality.
\end{proof}

\begin{theorem}\label{p-est-AL}
  If a Banach lattice $E$ satisfies the lower $p$-estimate with constant
  $M$ then $E_{[p]}$ is lattice isomorphic to an
  AL-space. Furthermore, if $M=1$ then $E_{[p]}$ is an AL-space.
\end{theorem}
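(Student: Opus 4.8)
The plan is to reduce everything to the standard fact recalled just before Lemma~\ref{p-est-norm}: a Banach lattice satisfying the lower $1$-estimate is lattice isomorphic to an AL-space, and is isometrically an AL-space when the estimate holds with constant~$1$. Thus it suffices to prove that $E_{[p]}$ satisfies the lower $1$-estimate, with constant $M^p$ in general and with constant~$1$ when $M=1$. The starting point is that $E_{(p)}$ already satisfies such an estimate: if $x_1,\dots,x_n$ are disjoint in $E$, then $x_1\oplus\dots\oplus x_n=\sum_kx_k$ by Lemma~\ref{calc-facts}\eqref{perp-osum}, so Lemma~\ref{p-est-norm} is precisely the lower $1$-estimate $\bignorm{x_1\oplus\dots\oplus x_n}_{(p)}\ge\frac{1}{M^p}\sum_k\norm{x_k}_{(p)}$ for the vector-lattice operations $(\oplus,\odot)$ of $E_{(p)}$.

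Next I would transport this estimate through the quotient $Q=E_{(p)}/\ker\norm{\cdot}_{(p)}$ and its completion $E_{[p]}=\overline Q$. The quotient lattice norm of a class equals the seminorm value of any representative, so no constant is lost there; the one thing to arrange is that disjoint positive families in $Q$ (resp. in $E_{[p]}$) be realized, up to the relevant limit, by disjoint positive elements of $E_{(p)}$ (resp. of $Q$). For this I would apply the disjointification $x_i\mapsto\bigl(x_i-\bigvee_{j\ne i}x_j\bigr)^+$ to arbitrary positive representatives, respectively to norm-approximants: the resulting elements are genuinely pairwise disjoint, since $\bigl(x_i-\bigvee_{j\ne i}x_j\bigr)^+\le(x_i-x_k)^+$ is disjoint from $\bigl(x_k-\bigvee_{j\ne k}x_j\bigr)^+\le(x_k-x_i)^+$, while their images (resp. limits) are the prescribed disjoint elements because $(a-b)^+=a$ for disjoint $a,b\ge0$. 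Feeding these disjoint elements into the $E_{(p)}$-estimate and using continuity of the lattice operations and of the norm yields the lower $1$-estimate on $E_{[p]}$ with constant $M^p$, and the standard fact gives the first assertion.

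For the case $M=1$ the estimate on $E_{[p]}$ holds with constant~$1$, that is $\bignorm{\sum_k\eta_k}\ge\sum_k\norm{\eta_k}$ for disjoint positive $\eta_k$; since the triangle inequality always supplies the reverse inequality, the norm of $E_{[p]}$ is additive on disjoint positive elements. In the notation of the standard fact this forces $\trinorm{\cdot}=\norm{\cdot}$, so the given norm is itself an AL-norm and $E_{[p]}$ is an AL-space, not merely isomorphic to one.

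I expect the main obstacle to be this transfer step rather than any single inequality: one must check that the lower $1$-estimate survives both the quotient and the completion without degrading the constant. The substance of that check is the disjointification above, which lets disjoint elements downstairs be lifted to, and approximated by, honestly disjoint elements upstairs; everything else is continuity of the lattice and norm operations together with the identification of the quotient norm with the seminorm value.
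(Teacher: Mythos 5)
Your proof is correct and takes essentially the same route as the paper's: apply Lemma~\ref{p-est-norm} to obtain the lower $1$-estimate on $E_{(p)}$, transfer it to the quotient $E_{(p)}/\ker\norm{\cdot}_{(p)}$ and its completion $E_{[p]}$, and invoke the standard fact recalled before Lemma~\ref{p-est-norm} (with disjoint additivity giving an AL-space directly when $M=1$). The paper dismisses the transfer step with ``it is easy to see''; your disjointification argument is exactly the detail that justifies it.
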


\begin{proof}
  Suppose that $E$ satisfies a lower $p$-estimate with constant~$M$.
  Applying Lemma~\ref{p-est-norm}, we have
  \begin{math}
    M^p\bignorm{\sum_{k=1}^nx_k}_{(p)}
    \ge\sum_{k=1}^n\norm{x_k}_{(p)}
  \end{math}
  whenever $x_1,\dots,x_n$ are disjoint in~$E$. It is easy to see that
  this inequality remains valid in $E_{(p)}/\ker\norm{\cdot}_{(p)}$
  and,
  furthermore, in~$E_{[p]}$.
% If $M=1$ then, combining this inequality
%  with the triangle inequality, we immediately see that $E_{[p]}$ is
%  an AL-space. If $M\ne 1$ then the result follows from Lemma~\ref{c-AL}.
\end{proof}

\section{Main results}

Let $E$ be a Banach lattice. Let $\Ioc$ be the norm closed ideal
generated in $E\ftp E$ by the elements of the form $x\otimes y$ where
$x\perp y$ (without loss of generality, we may also assume that $x$
and $y$ are positive). We can view $\Ioc$ as the set of all
``off-diagonal'' elements of $E\ftp E$. Therefore,
following~\cite{BuskesBu}, we think of $(E\ftp E)/\Ioc$ as the
diagonal of $E\ftp E$. We claim that this space is lattice isometric
to~$E_{[2]}$.

\begin{theorem}\label{general-gen}
  Suppose that $E$ is a Banach lattice. Then there exists a surjective
  lattice isometry $T\colon E_{[2]}\to(E\ftp E)/\Ioc$ such that
  $T\bigl(x+\ker\norm{\cdot}_{(2)}\bigr)=x\otimes\abs{x}+\Ioc$ for
  each $x\in E$.
\end{theorem}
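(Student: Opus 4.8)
The plan is to construct a contraction $S\colon(E\ftp E)/\Ioc\to E_{[2]}$ that will serve as the inverse of $T$, to produce $T$ directly from the formula in the statement, and then to verify that $S$ and $T$ are mutually inverse contractive lattice homomorphisms; a contraction with a contractive inverse is automatically an isometry. To build $S$, I would define $\varphi\colon E\times E\to E_{[2]}$ by $\varphi(x,y)=\xyroot$, regarded as an element of $E_{(2)}$ and then pushed into $E_{[2]}$. Homogeneity of the functional calculus gives $\varphi(\alpha x,y)=\alpha\odot\varphi(x,y)$, while~\eqref{distr} gives $\varphi(x_1+x_2,y)=\varphi(x_1,y)\oplus\varphi(x_2,y)$, so $\varphi$ is bilinear for the $(\oplus,\odot)$ structure of $E_{(2)}$; it is positive, it is a lattice bimorphism by Lemma~\ref{calc-facts}\eqref{roots-abs}, and it is orthosymmetric by Lemma~\ref{calc-facts}\eqref{perp-prod-0}. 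Choosing $v_1=\bigabs{\xyroot}$ in~\eqref{conv-norm} and applying Lemma~\ref{calc-facts}\eqref{roots-prod} gives $\norm{\varphi(x,y)}_{(2)}\le\norm{x}\,\norm{y}$, so $\norm{\varphi}\le1$. By Remark~\ref{phi-tensor} there is a lattice homomorphism $\varphi^\otimes\colon E\ftp E\to E_{[2]}$ with $\norm{\varphi^\otimes}\le1$ and $\varphi^\otimes(x\otimes y)=\xyroot$; being a continuous lattice homomorphism vanishing on each generator $x\otimes y$ with $x\perp y$, its kernel is a closed ideal containing $\Ioc$, so it descends to a contractive lattice homomorphism $S$ with $S(x\otimes y+\Ioc)=\xyroot+\ker\norm{\cdot}_{(2)}$. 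Write $q$ for the quotient map $E\ftp E\to(E\ftp E)/\Ioc$.

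Next I would build $T$. Set $T_0(x)=x\otimes\abs{x}+\Ioc$. A short computation with the sign convention gives $T_0(\alpha\odot x)=\alpha\,T_0(x)$. The crucial point is additivity for $\oplus$, namely
\[
  (x\oplus y)\otimes\abs{x\oplus y}\ \equiv\ x\otimes\abs{x}+y\otimes\abs{y}\quad\text{modulo }\Ioc ,
\]
which makes $T_0$ linear from $E_{(2)}$ to $(E\ftp E)/\Ioc$; one checks similarly that $T_0$ is a lattice homomorphism. Granting additivity, whenever $\abs{x}=v_1\oplus\dots\oplus v_n$ with $v_i\ge0$ we obtain $T_0(\abs{x})=\sum_i v_i\otimes v_i+\Ioc$, so $\norm{T_0(x)}\le\sum_i\norm{v_i}^2$ by the triangle inequality and the cross-norm identity; taking the infimum over such decompositions yields $\norm{T_0(x)}\le\norm{x}_{(2)}$. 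Hence $T_0$ kills $\ker\norm{\cdot}_{(2)}$, factors through $E_{(2)}/\ker\norm{\cdot}_{(2)}$, and extends by continuity to a contractive lattice homomorphism $T\colon E_{[2]}\to(E\ftp E)/\Ioc$ with $T(x+\ker\norm{\cdot}_{(2)})=x\otimes\abs{x}+\Ioc$.

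It then remains to verify that $S$ and $T$ are inverse to each other on the relevant dense sets. By~\eqref{sq-abs}, $S\bigl(T(x+\ker\norm{\cdot}_{(2)})\bigr)=S(x\otimes\abs{x}+\Ioc)=x^{\frac{1}{2}}\abs{x}^{\frac{1}{2}}+\ker\norm{\cdot}_{(2)}=x+\ker\norm{\cdot}_{(2)}$, so $S\circ T$ is the identity on the dense range of the canonical map $E\to E_{[2]}$, hence on all of $E_{[2]}$. In the other direction $T\bigl(S(x\otimes y+\Ioc)\bigr)=\xyroot\otimes\bigabs{\xyroot}+\Ioc$, and a second congruence of the same type, $\xyroot\otimes\bigabs{\xyroot}\equiv x\otimes y$ modulo $\Ioc$, shows $T\circ S$ is the identity on $q(E\otimes E)$, hence on $(E\ftp E)/\Ioc$. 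Thus $T$ and $S$ are mutually inverse contractive lattice homomorphisms, whence each is a surjective lattice isometry; this is exactly the assertion about~$T$.

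The main obstacle is the two congruences modulo $\Ioc$ used above. I expect to establish both by first treating positive disjoint $x,y$: there Lemma~\ref{calc-facts}\eqref{perp-osum} gives $x\oplus y=x+y$ and $\bigabs{\xyroot}=0$ by Lemma~\ref{calc-facts}\eqref{perp-prod-0}, while the cross terms $x\otimes y$ and $y\otimes x$ lie in $\Ioc$ by definition, so both congruences collapse to trivialities. The general positive case should follow by expressing $x$ and $y$ through the joint functional calculus and approximating them by disjointly supported combinations, and the signed case by splitting into positive and negative parts and absorbing all cross terms into $\Ioc$.
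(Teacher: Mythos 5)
Your architecture is the paper's: your $S$ is exactly the paper's induced map $\widetilde{(ir\varphi)^\otimes}$ (same $\varphi$, same appeal to Remark~\ref{phi-tensor}, same observation that the kernel of a positive map contains $\Ioc$), your estimate $\norm{T_0(x)}\le\norm{x}_{(2)}$ is the paper's estimate, and the finish via a pair of mutually inverse contractive lattice homomorphisms on dense sets is how the paper concludes. The genuine gap is precisely the step you flag as the main obstacle: the congruence $(x\oplus y)\otimes\abs{x\oplus y}\equiv x\otimes\abs{x}+y\otimes\abs{y}$ modulo $\Ioc$, i.e.\ additivity of $T_0$ for the concavified operations. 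Your plan --- prove it for disjoint positive $x,y$ and extend ``by approximating them by disjointly supported combinations'' --- cannot be carried out in a general Banach lattice, because such approximations need not exist. Take $E=C[0,1]$ and $x=y=\one$: if $v_1,\dots,v_n\in E_+$ are pairwise disjoint, the open sets $\{v_i>0\}$ are pairwise disjoint, and by connectedness of $[0,1]$ they cannot cover $[0,1]$ once two of them are nonempty; at an uncovered point $v_1\oplus\dots\oplus v_n=v_1+\dots+v_n$ vanishes, so $\bignorm{\one-(v_1\oplus\dots\oplus v_n)}\ge 1$ for every nontrivial disjoint combination. Passing to the joint functional calculus does not help: it represents $x,y$ inside an AM-space $C(K)$ whose $K$ may be connected, so the same obstruction persists. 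Hence there is no density route from the disjoint case to the general case, and no continuity argument bridges it; your central step is unproved and the sketched method fails as stated.

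What fills this hole in the paper is an external theorem, not a computation: since $q\otimes\colon E\times E\to(E\ftp E)/\Ioc$ is an orthosymmetric bilinear lattice bimorphism, Theorem~9(ii) of \cite{Buskes:01} (squares of Riesz spaces) yields a lattice homomorphism $S_0\colon E_{(2)}\to(E\ftp E)/\Ioc$ with $S_0\varphi=q\otimes$, i.e.\ $S_0\bigl(\xyroot\bigr)=x\otimes y+\Ioc$; taking $y=\abs{x}$ recovers your $T_0$, now linear \emph{by construction}. Your congruence is exactly the content of this factorization, and its proof rests on the structure theory of orthosymmetric bilinear maps (in particular their symmetry), which is genuinely deeper than disjoint approximation. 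Note also that the factorization dissolves your second congruence $\bigl(\xyroot\bigr)\otimes\bigabs{\xyroot}\equiv x\otimes y$ as well: it is $S_0$ applied to the identity $\varphi\bigl(\xyroot,\xyroot\bigr)=\varphi(x,y)$, whereas on your route it is a second unproved statement of the same depth. If you cite (or reprove) the square theorem at this point, the remainder of your argument goes through and coincides with the paper's proof.
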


\begin{proof}
  Define a map $\varphi\colon E\times E\to E_{(2)}$ by
  $\varphi(x,y)=\xyroot$. By the nature of the vector
  operations in~$E_{(2)}$, this map is bilinear. Indeed,
  \[
    \varphi(\lambda x,y)=(\lambda x)^{\frac{1}{2}}y^{\frac{1}{2}}
      =\lambda^{\frac{1}{2}}\xyroot
      =\lambda\odot(\xyroot)
      =\lambda\odot \varphi(x,y).
  \]  
  Similarly, $\varphi(x,\lambda y)=\lambda\odot \varphi(x,y)$. Also,
  $\varphi(x_1+x_2,y)=\varphi(x_1,y)\oplus \varphi(x_2,y)$ by~\eqref{distr}; we obtain
  $\varphi(x,y_1+y_2)=\varphi(x,y_1)\oplus \varphi(x,y_2)$ in a similar fashion.  For
  any $x,y\in E$ we have by Lemma~\ref{calc-facts}\eqref{roots-prod}
  \[
    \bignorm{\varphi(x,y)}_{(2)}=\bignorm{\xyroot}_{(2)}
     \le \bignorm{\xyroot}^2
     \le \Bigl(\norm{x}^{\frac{1}{2}}\norm{y}^{\frac{1}{2}}\Bigr)^2
     =\norm{x}\norm{y},
  \]
  so that $\norm{\varphi}\le 1$. Clearly, $\varphi$ is a continuous
  lattice bimorphism; it is orthosymmetric by
  Lemma~\ref{calc-facts}\eqref{perp-prod-0}.

  Put $N=\ker\norm{\cdot}_{(2)}$ and let $r\colon E_{(2)}\to
  E_{(2)}/N$ be the canonical quotient map. Also, let $i\colon E_{(2)}/N\to
  E_{[2]}$ be the natural inclusion map. Consider the map
  $(ir\varphi)^\otimes\colon E\ftp E\to E_{[2]}$ as in
  Remark~\ref{phi-tensor} (see Fugure~\ref{diagram-gen}); then
  $(ir\varphi)^\otimes$ is a lattice homomorphism and
  $\norm{(ir\varphi)^\otimes}\le 1$.  Note that if $x\perp y$ then
  $(ir\varphi)^\otimes(x\otimes y)=ir\varphi(x,y)=0$.  Since
  $(ir\varphi)^\otimes$ is positive, it vanishes on~$\Ioc$. Consider
  the quotient space $(E\ftp E)/\Ioc$; let
  \begin{math}
  q\colon E\ftp E\to(E\ftp E)/\Ioc  
  \end{math}
  be the canonical
  quotient map.  Since $\Ioc\subseteq\ker(ir\varphi)^\otimes$, we can
  consider the induced map $\widetilde{(ir\varphi)^\otimes}\colon
  (E\ftp E)/\Ioc\to E_{[2]}$ such that
  \begin{math}
    \widetilde{(ir\varphi)^\otimes}q=(ir\varphi)^\otimes.
  \end{math}

  \begin{figure}[!htb]\caption{}\label{diagram-gen}
    \[\xymatrix@=80pt{
    E\times E \ar[r]^\varphi \ar[d]_{\otimes}  & E_{(2)} 
         \ar[ddl]^{S} \ar[r]^r  & 
         E_{(2)}/N \ar[ddll]^{\widetilde{S}} \ar[r]^i &
         E_{[2]} \ar@{.>}@/^2pc/[ddlll]^T  \\
    E\ftp E \ar[urrr]^(0.4){(ir\varphi)^{\otimes}} \ar[d]_q &&&\\
    (E\ftp E)/\Ioc \ar[uurrr]|*+<4pt>{\scriptstyle{\widetilde{(ir\varphi)^\otimes}}} &&
    }
    \]
  \end{figure}

  Consider the map $q\otimes$ from $E\times E$ to
  $(E\ftp E)/\Ioc$. This map is clearly bilinear and
  orthosymmetric. Therefore, by Theorem~9(ii)
  of~\cite{Buskes:01}, there exists a lattice homomorphism $S\colon
  E_{(2)}\to (E\ftp E)/\Ioc$ such that $q\otimes=S\varphi$.
  Note that for each $x,y\in E$ we have
  \begin{equation}
     \label{Sxy-gen}
     S(\xyroot)=S\varphi(x,y)=q\otimes(x,y)=
     x\otimes y+\Ioc.
  \end{equation}
  In particular, taking $y=\abs{x}$, we get
  $Sx=x\otimes\abs{x}+\Ioc$.

  We claim that $\norm{Sx}\le\norm{x}_{(2)}$ for each $x\in E$. Indeed,
  take $v_1,\dots,v_n\in E_+$ such that
  $\abs{x}=v_1\oplus\dots\oplus v_n$. Since $S$ is a lattice homomorphism,
  we have
  \[
    \abs{Sx}=S\abs{x}=Sv_1+\dots+Sv_n
    =v_1\otimes\abs{v_1}+\dots+v_n\otimes\abs{v_n}+\Ioc.
  \]
  By the definition of a quotient norm,
  \[
    \norm{Sx}\le\Bignorm{\sum_{i=1}^nv_i\otimes\abs{v_i}}_\api\le
    \sum_{i=1}^n\bignorm{v_i\otimes\abs{v_i}}_\api=
    \sum_{i=1}^n\norm{v_i}^2
  \]
  because $\norm{\cdot}_\api$ is a cross-norm.  It follows now
  from~\eqref{conv-norm} that $\norm{Sx}\le\norm{x}_{(2)}$.

  In particular, $N\subseteq\ker S$. It follows that $S$ induces a
  lattice homomorphism $\widetilde{S}\colon E_{(2)}/N\to(E\ftp
  E)/\Ioc$ such that $S=\widetilde{S}r$. We now show that $\widetilde
  S$ is an isometry.  For any $x\in E$ we have
  \begin{math}
    \norm{\widetilde{S}(x+N)}
    =\norm{Sx}\le\norm{x}_{(2)}
    =\norm{x+N},
  \end{math}
  so that $\norm{\widetilde S}\le 1$. On the other hand,
  for every $v\in \Ioc$ we have
  $(ir\varphi)^\otimes(v)=0$, so that $(ir\varphi)^\otimes\bigl(x\otimes
  \abs{x}+v\bigr)=r\varphi\bigl(x,\abs{x}\bigr)=rx=x+N$ by~\eqref{sq-abs}. Since
  $\norm{(ir\varphi)^\otimes}\le 1$, we get
  $\norm{x+N}\le\bignorm{x\otimes\abs{x}+v}_\api$. Taking infimum over
  all $v\in \Ioc$, we get $\norm{x+N}\le\norm{Sx}=\norm{\widetilde
    S(x+N)}$. Therefore, $\widetilde S$ is an isometry. It follows that
  $\widetilde S$ extends to a lattice isometry
  $T\colon E_{[2]}\to(E\ftp E)/\Ioc$. Note that
  $T(x+N)=Sx=x\otimes\abs{x}+\Ioc$ for each $x\in E$.

  We claim that $T$ is the inverse of
  $\widetilde{(ir\varphi)^\otimes}$. Indeed, for every $x\in E$
  we have
  \begin{displaymath}
     \widetilde{(ir\varphi)^\otimes}T(x+N)=
     \widetilde{(ir\varphi)^\otimes}\bigl(x\otimes\abs{x}+\Ioc\bigr)
     =(ir\varphi)^\otimes\bigl(x\otimes\abs{x}\bigr)
     =ir\varphi\bigl(x,\abs{x}\bigr)=irx=x+N
  \end{displaymath}
  by~\eqref{sq-abs}. This means that $\widetilde{(ir\varphi)^\otimes}T$
  is the identity on $E_{(2)}/N$ and, therefore, on~$E_{[2]}$.
  On the other hand, for each $x,y\in E$ it follows
  from~\eqref{Sxy-gen} that
  \begin{multline*}
    T\widetilde{(ir\varphi)^\otimes}(x\otimes y+\Ioc)
    =T(ir\varphi)^\otimes(x\otimes y)
    =Tir\varphi(x,y)\\
    =Tr(x^{\frac{1}{2}}y^{\frac{1}{2}})
    =\widetilde Sr(x^{\frac{1}{2}}y^{\frac{1}{2}})
    =S(x^{\frac{1}{2}}y^{\frac{1}{2}})
    =x\otimes y+\Ioc.
  \end{multline*}
  Hence, $T\widetilde{(ir\varphi)^\otimes}$ is the identity on $q(E\otimes
  E)$.  Since $E\otimes E$ is dense in $E\ftp E$ then $q(E\otimes E)$
  is dense in $(E\ftp E)/\Ioc$, so that
  $T\widetilde{(ir\varphi)^\otimes}$ is the identity on $(E\ftp
  E)/\Ioc$.  Therefore, $T$ is the inverse of
  $\widetilde{(ir\varphi)^\otimes}$. It follows that $T$ is onto.
\end{proof}

Recall that if $E$ is discrete then $\norm{\cdot}_{(2)}$ is a norm by
Corollary~\ref{discr-norm}, so that $E_{(2)}$ is a normed lattice and
$E_{[2]}$ equals~$\overline{E_{(2)}}$, the completion of
$E_{(2)}$; if $E$ is 2-convex then $E_{(2)}$ is a Banach lattice by
Proposition~\ref{p-conv-BL}; in this case $E_{[2]}=E_{(2)}$.

\begin{corollary}\label{general-normed}
  Suppose that $E$ is Banach lattice. If $E_{(2)}$ is a normed
  lattice then it is lattice isometric to a dense sublattice of
  $(E\ftp E)/\Ioc$ via $x\in E_{(2)}\mapsto x\otimes\abs{x}+\Ioc$.
\end{corollary}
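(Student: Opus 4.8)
The plan is to read this off directly from Theorem~\ref{general-gen}, the whole point being that the hypothesis ``$E_{(2)}$ is a normed lattice'' collapses the intermediate constructions in that theorem. First I would observe that if $E_{(2)}$ is a normed lattice, then $\norm{\cdot}_{(2)}$ is a genuine norm, so its kernel $N=\ker\norm{\cdot}_{(2)}$ is trivial. Consequently the quotient $E_{(2)}/N$ is just $E_{(2)}$ itself, and $E_{[2]}$, being by definition the completion of $E_{(2)}/N$, is exactly the completion $\overline{E_{(2)}}$. In particular $E_{(2)}$ sits inside $E_{[2]}$ as a dense sublattice, the inclusion being a lattice isometry onto its (dense) image.

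Next I would invoke Theorem~\ref{general-gen} to obtain the surjective lattice isometry $T\colon E_{[2]}\to(E\ftp E)/\Ioc$ with $T\bigl(x+N\bigr)=x\otimes\abs{x}+\Ioc$ for each $x\in E$. Since $N=\{0\}$, this formula reads $T(x)=x\otimes\abs{x}+\Ioc$, which is precisely the map named in the statement of the corollary. Now I would restrict $T$ to the dense sublattice $E_{(2)}\subseteq E_{[2]}$. The restriction of a lattice homomorphism to a sublattice is again a lattice homomorphism, and the restriction of an isometry is again an isometry; since $T$ is injective, its restriction is a lattice isomorphism onto $T(E_{(2)})$. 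Thus $E_{(2)}$ is lattice isometric, via $x\mapsto x\otimes\abs{x}+\Ioc$, to the sublattice $T(E_{(2)})$ of $(E\ftp E)/\Ioc$.

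It remains to check density of the image, which is where the surjectivity of $T$ does its work: as a surjective isometry, $T$ is a homeomorphism of $E_{[2]}$ onto $(E\ftp E)/\Ioc$, so it carries the dense subset $E_{(2)}$ of $E_{[2]}$ to a dense subset $T(E_{(2)})$ of $T(E_{[2]})=(E\ftp E)/\Ioc$. This gives exactly the assertion that $E_{(2)}$ is lattice isometric to a \emph{dense} sublattice of $(E\ftp E)/\Ioc$ via the stated map. I do not expect any genuine obstacle here: the only things to verify carefully are the identification $E_{[2]}=\overline{E_{(2)}}$ when the seminorm is in fact a norm, and the (standard) preservation of density under a surjective isometry; everything else is a transcription of Theorem~\ref{general-gen} to the special case $N=\{0\}$.
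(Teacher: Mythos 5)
Your proof is correct and is exactly the argument the paper intends: the corollary is stated without proof because it is the special case $N=\ker\norm{\cdot}_{(2)}=\{0\}$ of Theorem~\ref{general-gen}, in which case $E_{[2]}=\overline{E_{(2)}}$ and the surjective lattice isometry $T$ restricts to the dense copy of $E_{(2)}$, giving $x\mapsto x\otimes\abs{x}+\Ioc$ onto a dense sublattice. Your verifications (triviality of the kernel, restriction of a lattice isometry, preservation of density under a surjective isometry) are precisely the routine points to check, and they all go through.
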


\begin{corollary}\label{general-BL}
  Suppose that $E$ is a Banach lattice such that $E_{(2)}$ is also a
  Banach lattice. Then the map $T\colon E_{(2)}\to(E\ftp E)/\Ioc$
  given by $Tx=x\otimes\abs{x}+\Ioc$ is a surjective linear lattice
  isometry.
\end{corollary}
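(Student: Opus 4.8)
The plan is to reduce the statement to Theorem~\ref{general-gen} by showing that the hypothesis renders the passage from $E_{(2)}$ to $E_{[2]}$ vacuous. First I would note that if $E_{(2)}$ is a Banach lattice then $\norm{\cdot}_{(2)}$ is in particular a genuine norm (rather than merely a semi-norm), so its kernel $N=\ker\norm{\cdot}_{(2)}$ equals $\{0\}$; hence the quotient map is the identity and $E_{(2)}/N=E_{(2)}$. Since $E_{(2)}$ is moreover complete, it coincides with its own completion, and therefore $E_{[2]}$, which was defined as the completion of $E_{(2)}/N$, equals $E_{(2)}$.

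With the identification $E_{[2]}=E_{(2)}$ in hand, the corollary is immediate. Theorem~\ref{general-gen} supplies a surjective lattice isometry $T\colon E_{[2]}\to(E\ftp E)/\Ioc$ satisfying $T\bigl(x+N\bigr)=x\otimes\abs{x}+\Ioc$; since $N=\{0\}$, we have $x+N=x$, so $T$ is precisely the map $x\mapsto x\otimes\abs{x}+\Ioc$ on $E_{(2)}=E_{[2]}$, and it is a surjective lattice isometry as asserted. Linearity, understood with respect to the operations $\oplus$ and $\odot$ of $E_{(2)}$, is automatic once $T$ is known to be a lattice homomorphism, so nothing further need be checked.

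I expect no genuine obstacle here; the only point meriting a moment's care is confirming that the hypothesis ``$E_{(2)}$ is a Banach lattice'' really does collapse both the quotient by $N$ and the subsequent completion in the construction of $E_{[2]}$. As a cross-check, one could instead route the argument through Corollary~\ref{general-normed}: since $E_{(2)}$ is in particular a normed lattice, that corollary already gives that $x\mapsto x\otimes\abs{x}+\Ioc$ is a lattice isometry onto a dense sublattice of $(E\ftp E)/\Ioc$. Completeness of $E_{(2)}$ then forces the image to be a complete, hence closed, subspace; being also dense, it must be all of $(E\ftp E)/\Ioc$, which recovers surjectivity by a second route.
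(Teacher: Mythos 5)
Your proposal is correct and matches the paper's intended derivation: the paper gives no separate proof of Corollary~\ref{general-BL}, having already noted just before it that when $E_{(2)}$ is a Banach lattice the kernel $N$ is trivial and the completion is vacuous, so $E_{[2]}=E_{(2)}$ and Theorem~\ref{general-gen} applies verbatim. Your alternative cross-check via Corollary~\ref{general-normed} (completeness of $E_{(2)}$ forcing the dense isometric image to be closed, hence everything) is also sound, though unnecessary.
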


\begin{remark}
  Theorem~\ref{general-gen} provides a new characterization of the
  ideal~$\Ioc$. It was observed in the proof of
  Theorem~\ref{general-gen} that $(ir\varphi)^\otimes$ vanishes on
  $\Ioc$ and the induced map $\widetilde{(ir\varphi)^\otimes}$ on
  $(E\ftp E)/\Ioc$ is a bijection; hence
  $\Ioc=\ker(ir\varphi)^\otimes$. This can be used to easily verify
  whether certain elements belong to~$\Ioc$. For example, it
  follows from $\varphi\bigl(\xyroot,\xyroot\bigr)=\varphi(x,y)$ that
   \begin{math}
     x\otimes y-\bigl(\xyroot\bigr)\otimes\bigl(\xyroot\bigr)
   \end{math}
  is in $\ker(ir\varphi)^\otimes$ and, hence, in $\Ioc$ for every
  $x,y\in E$.

  Similarly, one can check that $x\otimes y-y\otimes x\in\Ioc$ for all
  $x,y\in E$. Let $Z$ be the closed sublattice of $E\ftp E$ generated
  by vectors of the form $x\otimes y-y\otimes x$. We refer to $Z$ as
  the \term{antisymmetric part} of $E\ftp E$. This yields
  $Z\subseteq\Ioc$ (this inclusion also follows from Proposition~4.33
  of~\cite{Loane:thesis}, obtained there by very different means).
\end{remark}

\begin{remark}\label{Ioc-m}
  Suppose that $E$ is such that $E_{(2)}$ is a Banach lattice. Then we
  can identify $T^{-1}$ in Corollary~\ref{general-BL}.  Indeed, in
  this case, the maps $i$ and $r$ in the proof of
  Theorem~\ref{general-gen} are just the identity maps, so that
  $T^{-1}=\widetilde{\varphi^\otimes}$ where $\varphi(x,y)=\xyroot$
  (see Figure~\ref{diagram}).
  \begin{figure}[!htb]\caption{}\label{diagram}
     \[\xymatrix{
     E\times E \ar[r]^\varphi \ar[d]_{\otimes}  & E_{(2)} \ar@{.>}@/^1pc/[ddl]^{S=T}\\
     E\ftp E \ar[ur]^{\varphi^{\otimes}} \ar[d]_q &\\
     (E\ftp E)/\Ioc \ar[uur]|*+<4pt>{\scriptstyle{\widetilde{\varphi^\otimes}}} &
     }
     \]
  \end{figure}
%  Note
%  that the map $\varphi$ is the same as $\varphi$ in the proof of
%  Theorem~\ref{m-basis}. 
   Furthermore, in this case, we have
  $\Ioc=\ker \varphi^\otimes$.
% Indeed, if $u\in\ker \varphi^\otimes$
%  then, by Figure~\ref{diagram}, $q(u)=T\varphi^\otimes(u)=0$, so
%  that $u\in \Ioc$.
\end{remark}

\begin{remark}\label{diag-ExE}
  Again, suppose that $E$ is such that $E_{(2)}$ is a Banach lattice.
  It follows from Corollary~\ref{general-BL} that every equivalence
  class in $(E\ftp E)/\Ioc$ contains a representative of the form
  $x\otimes\abs{x}$ for some $x\in E$.  Therefore, $q(E\otimes
  E)=(E\ftp E)/\Ioc$, where $q\colon E\ftp E\to(E\ftp E)/\Ioc$ is the
  canonical quotient map.  In other words, the elements of $E\otimes
  E$ (and even elementary tensor products) are sufficient to ``capture
  all of the diagonal'' in $E\ftp E$.

  As usual, one can identify $q(E\otimes E)$ with the quotient of
  $E\otimes E$ over $\Ioc$ or, more precisely, with $(E\otimes
  E)/\bigl((E\otimes E)\cap \Ioc\bigr)$, where $E\otimes E$ is viewed
  as a (non-closed) subspace of $E\ftp E$. Therefore,
  \begin{equation}\label{di-ExE}
    (E\ftp E)/\Ioc=
     (E\otimes E)/\bigl((E\otimes E)\cap \Ioc\bigr)
  \end{equation}
\end{remark}

Combining Theorems~\ref{p-est-AL} and~\ref{general-gen}, we
immediately get the following.

\begin{corollary}
  Suppose that $E$ is a Banach lattice satisfying the lower 2-estimate
  with constant~$M$. Then $(E\ftp E)/\Ioc$ is lattice isomorphic to an
  AL-space. If $M=1$ then $(E\ftp E)/\Ioc$ is an AL-space.
\end{corollary}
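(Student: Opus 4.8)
The plan is simply to chain the two theorems that precede this corollary, exactly as the text announces. By Theorem~\ref{general-gen} the diagonal $(E\ftp E)/\Ioc$ is lattice isometric to $E_{[2]}$, and by Theorem~\ref{p-est-AL} the space $E_{[2]}$ already carries an AL-space structure whenever $E$ satisfies a lower $2$-estimate. So the entire task reduces to transporting the AL-conclusion across the lattice isometry.

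First I would specialize Theorem~\ref{p-est-AL} to $p=2$. Since $E$ satisfies the lower $2$-estimate with constant~$M$, that theorem gives that $E_{[2]}$ is lattice isomorphic to an AL-space, and moreover that $E_{[2]}$ is itself an AL-space in the case $M=1$.

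Next I would invoke Theorem~\ref{general-gen} to fix a surjective lattice isometry $T\colon E_{[2]}\to(E\ftp E)/\Ioc$, and then transport the conclusion along $T$. For general $M$, if $\Phi\colon E_{[2]}\to L$ is a lattice isomorphism onto an AL-space~$L$, then $\Phi T^{-1}\colon(E\ftp E)/\Ioc\to L$ is again a lattice isomorphism, being a composition of a lattice isometry with a lattice isomorphism; hence $(E\ftp E)/\Ioc$ is lattice isomorphic to~$L$. For the case $M=1$, I would note that a surjective lattice isometry preserves the AL-space property outright, since it preserves the norm, the order, and the lattice operations, and therefore preserves the defining identity $\norm{u+v}=\norm{u}+\norm{v}$ for disjoint (or merely positive) $u,v$; thus $(E\ftp E)/\Ioc$ inherits the AL-norm directly from $E_{[2]}$.

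There is essentially no obstacle here: the mathematical content resides entirely in Theorems~\ref{p-est-AL} and~\ref{general-gen}, and the corollary is their formal combination. The only point requiring a moment's care is the routine observation that both notions---being an AL-space and being lattice isomorphic to one---are stable under composition with the lattice isometry~$T$, which is immediate from the definitions.
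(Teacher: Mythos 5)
Your proposal is correct and matches the paper exactly: the paper states this corollary with no separate proof, introducing it as the immediate combination of Theorems~\ref{p-est-AL} (with $p=2$) and~\ref{general-gen}, which is precisely your argument. Your added verification that both the AL-property (when $M=1$) and lattice isomorphism to an AL-space transport along the surjective lattice isometry $T$ is the routine step the paper leaves implicit, and it is carried out correctly.
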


% \begin{remark}
%   \marg{To be eventually removed}
%   Here is a direct proof of Corollary~\ref{xy-roots} which does not
%   use Theorem~\ref{general} and Remark~\ref{Ioc-m}. It is clear that
%   $\Ioc\subseteq\ker m^\otimes$. It follows from the definition of
%   $m$ that
%   \begin{equation}\label{m0iffperp}
%     m^\otimes(x\otimes y)=0\qquad\mbox{iff}\qquad x\perp y
%   \end{equation}

%   We claim that if $m^\otimes\bigl(x\otimes x-y\otimes y\bigr)=0$ then
%   $x\otimes x-y\otimes y\in \Ioc$. Indeed, suppose that $x\otimes
%   x-y\otimes y\in\ker m^\otimes$. Observe that
%   \begin{equation}
%     \label{plus-times-minus}
%     (x+y)\otimes(x-y)=\bigl(x\otimes x-y\otimes y\bigr)
%      -\bigl(x\otimes y-y\otimes x\bigr)
%   \end{equation}
%   Since $\bigl(x\otimes y-y\otimes x\bigr)$ belongs to the
%   antysimmetric part of $E\otimes E$, it belongs to $\Ioc$ by
%   Proposition~4.33 of~\cite{Loane:thesis}. Therefore, it is contained
%   in $\ker m^\otimes$. Hence, $m^\otimes$ vanishes on
%   $(x+y)\otimes(x-y)$, so that $(x+y)\otimes(x-y)$ is in $\Ioc$
%   by~\eqref{m0iffperp}. It now follows from~\eqref{plus-times-minus}
%   that $x\otimes x-y\otimes y$ is in $\Ioc$. This proves
%   the claim.

%   The conclusion of the corollary follows now from the claim
%   and the fact that $m^\otimes$ vanishes on $x\otimes
%   y-\bigl(\xyroot\bigr)\otimes\bigl(\xyroot\bigr)$.
% \marg{I do not see this now!}
% \end{remark}

\section{Function spaces}\label{functions}

In this section, we consider the case when $E$ is a K\"othe space on a
$\sigma$-finite measure space $(\Omega,\Sigma,\mu)$ as in
\cite[Definition~1.b.17]{Lindenstrauss:79}. That is, $E$ is contained
in the space $L_0(\Omega)$ of all measurable functions on $\Omega$
such that $E$ contains the characteristic functions of all sets of
finite measure and if $f\in E$, $g\in L_0(\Omega)$ and
$\abs{g}\le\abs{f}$ then $g\in E$ and $\norm{g}\le\norm{f}$.

It is easy to see that in a K\"othe space, the functional calculus
map~$\tau$, described in Subsection~\ref{FC}, agrees with almost
everywhere pointwise operations. Indeed, fix $x_1,\dots,x_n$ in $E$
and let $h\colon\mathbb R^n\to\mathbb R$ be a homogeneous continuous
function. It is easy to see that
\begin{displaymath}
  \bigabs{h(t_1,\dots,t_n)}\le M\max\limits_{1\le i\le n}\abs{t_i}
\end{displaymath}
for all $t_1,\dots,t_n\in\mathbb R$, where
\begin{displaymath}
  M=\max\bigl\{\abs{h(t_1,\dots,t_n)}\mid
  \max\limits_{1\le i\le n}\abs{t_i}=1\bigr\}.
\end{displaymath}
It follows that
\begin{displaymath}
  \Bigabs{h\bigl(x_1(\omega),\dots,x_n(\omega)\bigr)}\le
  M\max\limits_{1\le i\le n}\bigabs{x_i(\omega)}
\end{displaymath}
for all $\omega\in\Omega$, so that the usual composition function
$h(x_1,\dots,x_n)$ defined a.e.\ by 
\begin{displaymath}
  h(x_1,\dots,x_n)(\omega)=h\bigl(x_1(\omega),\dots,x_n(\omega)\bigr)
\end{displaymath}
satisfies
\begin{displaymath}
  \bigabs{h(x_1,\dots,x_n)}\le M\bigvee\limits_{1\le i\le n}\abs{x_i}
  \mbox{ a.e.;}
\end{displaymath}
it follows that $h(x_1,\dots,x_n)\in E$. Thus, almost everywhere
pointwise operations define a functional calculus on~$E$.  It follows
from the uniqueness of functional calculus that this functional
calculus agrees with $\tau$
\footnote{The same argument shows that on $C(K)$-spaces, $\tau$ agrees
  with the pointwise operations.}.

We proceed with a functional representation of $E_{(2)}$ (see,
e.g.,~\cite{Buskes:01} or \cite[p.~30]{Johnson:01}). The square of $E$
is defined via $E^2=\{x^2\mid x\in E\}$, where, again, by $x^2$ we
really mean $x\abs{x}$ and the product is defined a.e..  Note that the
map $S\colon x\in E_{(2)}\mapsto x^2\in E^2$ is a bijection.  In view
of this, we may transfer the Banach lattice structure from $E_{(2)}$
to~$E^2$. In particular, with this identification, $E^2$ is a vector
space. The main advantage of this approach is that addition and scalar
multiplication in $E^2$ are defined a.e.\ pointwise (the vector
operations on $E_{(2)}$ were defined exactly this way):
%\marg{A better way to express this?}
\[
  S(x\oplus y)=(x\oplus y)^2=\bigl((x^2+y^2)^{\frac{1}{2}}\bigr)^2=
    x^2+y^2=S(x)+S(y)
\]
and
\[
  S(\lambda\odot x)=(\lambda\odot x)^2=
   \bigl(\lambda^{\frac{1}{2}}x\bigr)^2=\lambda x^2=\lambda S(x).
\]
Observe, also, that if $x,y\in E$ then the function $xy$ is in~$E^2$.
Indeed, $\xyroot\in E$, so that
\begin{math}
  E^2\ni S\bigl(\xyroot\bigr)
  =\bigl(\xyroot\bigr)^2=xy.
\end{math}

In view of this construction, we can replace $E_{(2)}$ with $E^2$ in
the preceding section. In particular, instead of the map
$\varphi\colon E\times E\to E_{(2)}$ defined by $\varphi(x,y)=\xyroot$
in Remark~\ref{Ioc-m}, we can consider the corresponding map $m\colon
E\times E\to E^2$ defined by $m(x,y)=xy$. This map is obviously a
continuous orthosymmetric lattice bimorphism.

Suppose now that $E^2$ is a Banach lattice (for example, $E$ is
2-convex). Then the diagram in Figure~\ref{diagram} in
Corollary~\ref{general-BL} and Remark~\ref{Ioc-m}
becomes the diagram in Figure~\ref{m-diagram}. 
\begin{figure}[!htb]\caption{}\label{m-diagram}
  \[\xymatrix{
  E\times E \ar[r]^m \ar[d]_{\otimes}  & E^2 \ar@{.>}@/^1pc/[ddl]^T\\
  E\ftp E \ar[ur]^{m^{\otimes}} \ar[d]_q &\\
  (E\ftp E)/\Ioc \ar[uur]|*+<4pt>{\scriptstyle{\widetilde{m^\otimes}}} &
  }
  \]
\end{figure}

For $x,y\in E$, their elementary tensor product $x\otimes y$ can be
viewed as a function on $\Omega^2$ via $(x\otimes y)(s,t)=x(s)y(t)$
for $s,t\in\Omega$. This way, $E\otimes E$ is a subset of
$L_0(\Omega^2)$. We do not know whether $E\ftp E$ can
still be viewed as a sublattice of $L_0(\Omega^2)$, but this is
definitely the case in many important special cases.

Let $D$ be the diagonal of~$\Omega^2$, that is, $D=\bigl\{(s,s)\mid
s\in\Omega\bigr\}$. Of course, the map $s\to (s,s)$ is a bijection
between $\Omega$ and~$D$, so that we can view $D$ as a copy
of~$\Omega$. For an arbitrary function $u$ in $L_0(\Omega^2)$, one
cannot really consider the restriction of $u$ to $D$ because $D$ may
have measure zero in~$\Omega^2$. However, such a restriction may be
defined for elementary tensors via $(x\otimes y)(s,s)=x(s)y(s)$, which
is defined a.e. on~$\Omega$. That is, the restriction of $x\otimes y$
to $D$ is exactly $xy=m(x,y)=m^\otimes(x\otimes y)$ (as we identify
$D$ with $\Omega$). Extending this by linearity to $E\otimes E$, we
can view $m^\otimes$ on $E\otimes E$ (or even on $E\ftp E$) as the
\textit{restriction to the diagonal} map. Note that, in view of
Remark~\ref{diag-ExE} and, in particular, \eqref{di-ExE}, the space
$E\otimes E$ is sufficient to capture the diagonal part of $E\ftp E$.
Furthermore, for $u\in E\otimes E$ we have $u\in \Ioc$ iff
$m^\otimes(u)=0$ iff $u$ vanishes a.e.\ on the diagonal. It follows
that the both quotient spaces in~\eqref{di-ExE} can be viewed as the
space of the restrictions of the functions in $E\otimes E$ to~$D$.
Therefore, in the case of K\"othe spaces, Corollary~\ref{general-BL}
says that the restrictions of the elements of $E\otimes E$ (or $E\ftp
E$) to the diagonal are exactly the functions in $E^2$ (again, we
identify the diagonal with $\Omega$). Moreover, the norm of the
restriction (that is, the quotient norm from \eqref{di-ExE}) is the
same as its $E^2$ norm.

\begin{example}
\label{examp:Lp}
If $E=L_p$ for $1\le p<\infty$ then $E^2$ as a vector lattice
coincides with $L_{\frac{p}{2}}$. In the case $p\ge 2$, $E$ is
2-convex and hence $(E\ftp
E)/\Ioc=E_{[2]}=E_{(2)}=L_{\frac{p}{2}}$.  In the case $1\le p<2$, the
vector lattice $L_{\frac{p}{2}}$ (and, therefore, $E_{(2)}$) admits no
non-trivial positive functionals by, e.g.,
\cite[Theorem~5.24]{Aliprantis:03}. Note that every positive
functional $f$ on $E_{[2]}$ gives rise to a positive functional
$f\circ q$ on~$E_{(2)}$, where
\begin{math}
  q\colon E_{(2)}\to E_{(2)}/\ker\norm{\cdot}_{(2)}  
\end{math}
is the canonical quotient map. It follows that $E_{[2]}^*$ is trivial,
and so is~$E_{[2]}$. Hence $(E\ftp E)/\Ioc=E_{[2]}=\{0\}$, which is a
trivial AL-space.
\end{example}

\begin{example}
  Let $E=C[0,1]$. In this case, $E^2=E$. Also, $E\ftp E=C[0,1]^2$ by
  Corollary~3F of~\cite{Fremlin:74}. As before, we put $m(x,y)=xy$ for
  $x,y\in E$. In this case, the map $m^\otimes$ on $E\otimes E$ and,
  therefore, on $E\ftp E$, is the restriction to the diagonal, so that
  $\Ioc$ consists of those functions that vanish on the diagonal,
  while $(E\ftp E)/\Ioc$ is the space of the restrictions of the
  functions in $C[0,1]^2$ to the diagonal, which, naturally, can again
  be identified with $C[0,1]$.
\end{example}

\section{Banach lattices with a basis}

By a \term{Banach lattice with a basis} we mean a Banach lattice where
the order is defined by a basis. That is, $E$ has a (Schauder) basis
$(e_i)$ such that a vector $x=\sum_{i=1}^\infty x_ie_i$ is positive iff
$x_i\ge 0$ for all~$i$.  It follows that the basis $(e_i)$ is
1-unconditional. The converse is also true: every Banach space with a
1-unconditional basis is a Banach lattice in the induced order. It is
clear that every Banach lattice with a basis is discrete.

\subsection{Concavification of a Banach lattice with a basis}

Since $E$ is a K\"othe space, its continuous homogeneous functional
calculus in $E$ is coordinate-wise. For example, if
$x=\sum_{i=1}^{\infty}x_ie_i$ and $y=\sum_{i=1}^{\infty}y_ie_i$ then
\begin{displaymath}
 \xyroot=
  \sum_{i=1}^{\infty}x_i^{\frac{1}{2}}y_i^{\frac{1}{2}}e_i
 \qquad\mbox{and}\qquad
 (x^p+y^p)^{\frac{1}{p}}=
  \sum_{i=1}^{\infty}(x_i^p+y_i^p)^{\frac{1}{p}}e_i. 
\end{displaymath}
As before, we use the conventions $t^p=\abs{t}^p\sign t$ here for
$t,p\in\mathbb R$. 
%Given $x=\sum_{i=1}^\infty x_ie_i$ and
%$y=\sum_{i=1}^\infty y_ie_i$; we have $x\oplus
%y=\sum_{i=1}^\infty(x_i^p+y_i^p)^{\frac{1}{p}}e_i$. 

Next, we fix $p\ge 1$ and consider~$E_{(p)}$. Since $E$ is discrete,
$E_{(p)}$ is a normed lattice by Corollary~\ref{discr-norm}. Hence, in
this case, $E_{[p]}$ equals~$\overline{E_{(p)}}$, the completion
of~$E_{(p)}$. Since $(e_i)$ is disjoint in~$E$, it follows from
Lemma~\ref{calc-facts}\eqref{perp-osum} that
$x_1e_1+\dots+x_ne_n=x_1^p\odot e_1\oplus\dots\oplus x_n^p\odot e_n$.

\begin{lemma}\label{Ep-basis}
  Suppose that $E$ is a Banach lattice with a basis $(e_i)$. Then
  \begin{enumerate}
  \item\label{same-norm} $\norm{e_i}_{(p)}=\norm{e_i}^p$ for each $i$;
  \item\label{espans} if $x=\sum_{i=1}^\infty x_ie_i$ in $E$ then
    $x=\oplus{\rm -}\!\sum_{i=1}^\infty x_i^p\odot e_i$ in $E_{(p)}$; in
    particular, the series converges in $E_{(p)}$;
  \item\label{ei-basis-Ep} $(e_i)$ is a 1-unconditional basis of~$\overline{E_{(p)}}$.
  \end{enumerate}
\end{lemma}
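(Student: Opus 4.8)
The plan is to handle the three parts in order, relying on the facts already proved for discrete lattices, on the elementary bound $\norm{u}_{(p)}\le\norm{u}^p$, and on the fact that $\norm{\cdot}_{(p)}$ is a lattice seminorm with respect to the coordinatewise order. For~\eqref{same-norm} I would first note that each $e_i$ is an atom of $E$: if $0\le z\le e_i$ and $z=\sum_j z_je_j$, then positivity of $z$ and of $e_i-z$ forces $z_j=0$ for $j\ne i$ and $0\le z_i\le 1$, so $z=z_ie_i$ is a scalar multiple of $e_i$. Lemma~\ref{atom-norm} then gives $\norm{e_i}_{(p)}=\norm{e_i}^p$ immediately.

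For~\eqref{espans}, the displayed identity preceding the lemma already shows that the $n$-th partial $\oplus$-sum equals the ordinary partial sum $s_n=\sum_{i=1}^nx_ie_i$, so the only thing to prove is that $s_n\to x$ in the $E_{(p)}$-seminorm. I would record the bound $\norm{u}_{(p)}\le\norm{u}^p$, obtained from~\eqref{conv-norm} by taking the single competitor $v_1=\abs{u}$. Then, working coordinatewise in the K\"othe structure, I would identify the concavified difference of $x$ and $s_n$, namely $(x^p-s_n^p)^{\frac{1}{p}}$, with the ordinary tail $x-s_n=\sum_{i>n}x_ie_i$, since $(x_i^p-x_i^p)^{\frac{1}{p}}=0$ for $i\le n$ and $(x_i^p)^{\frac{1}{p}}=x_i$ for $i>n$. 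Combining these gives $\norm{(x^p-s_n^p)^{\frac{1}{p}}}_{(p)}=\norm{x-s_n}_{(p)}\le\norm{x-s_n}^p\to 0$, because $(e_i)$ is a basis of $E$; this is exactly convergence of the $\oplus$-series to $x$ in $E_{(p)}$.

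For~\eqref{ei-basis-Ep}, recall that $E_{(p)}$ is a normed lattice by Corollary~\ref{discr-norm}, so $\overline{E_{(p)}}$ is a Banach lattice, and $e_i\ne 0$ by~\eqref{same-norm}. Part~\eqref{espans} shows that the $\oplus,\odot$-span of $(e_i)$ is dense in $E_{(p)}$, hence in $\overline{E_{(p)}}$. The key uniform estimate is that every band projection $P_A\colon\sum x_ie_i\mapsto\sum_{i\in A}x_ie_i$ is an $\norm{\cdot}_{(p)}$-contraction: since the order of $E_{(p)}$ is the coordinatewise order of $E$ we have $\abs{P_Ax}\le\abs{x}$, and $\norm{\cdot}_{(p)}$ is a lattice seminorm, whence $\norm{P_Ax}_{(p)}\le\norm{x}_{(p)}$. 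Taking the initial segments $A=\{1,\dots,n\}$ yields contractive partial-sum projections $P_n$ with $P_nx=s_n\to x$ on the dense set $E_{(p)}$ by~\eqref{espans}; being uniformly bounded, they extend to $\overline{E_{(p)}}$ and converge strongly to the identity, so $(e_i)$ is a Schauder basis. Contractivity of all $P_A$ — equivalently the monotonicity $\abs{d_i}\le\abs{c_i}\Rightarrow\norm{\sum d_ie_i}_{(p)}\le\norm{\sum c_ie_i}_{(p)}$, read off from the same lattice-seminorm inequality — then makes this basis $1$-unconditional.

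The only genuinely delicate point is the passage to the completion: one must verify that the contractive band projections and the coordinatewise description of the order survive in $\overline{E_{(p)}}$. I expect to handle this purely by density and continuity — each $P_A$ extends to a contraction on $\overline{E_{(p)}}$, the positive cone of $\overline{E_{(p)}}$ is the closure of that of $E_{(p)}$, and the lattice-seminorm monotonicity is a closed condition — so that no estimate beyond those above is required.
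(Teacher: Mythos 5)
Your proposal is correct and takes essentially the same route as the paper: part~\eqref{same-norm} via atoms and Lemma~\ref{atom-norm} (you merely spell out the easy verification that each $e_i$ is an atom), part~\eqref{espans} via the identification $x\ominus s_n=v_n$ (the paper gets it from disjointness and Lemma~\ref{calc-facts}\eqref{perp-osum} rather than coordinatewise, but it is the same computation) together with the tail bound $\norm{v_n}_{(p)}\le\norm{v_n}^p\to 0$, and part~\eqref{ei-basis-Ep} from density of the span of the disjoint sequence $(e_i)$ in $\overline{E_{(p)}}$. The only difference is cosmetic: where the paper simply cites the standard fact that a disjoint sequence in a Banach lattice is a 1-unconditional basic sequence, you unpack its standard proof via the contractive band projections $P_A$ and the passage of the monotonicity inequality to the completion.
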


\begin{proof}
  \eqref{same-norm} follows immediately from Lemma~\ref{atom-norm}. To
  prove~\eqref{espans}, suppose that
  $x=\sum_{i=1}^\infty x_ie_i$ in~$E$. For each~$n$, we can write
  $x=u_n+v_n=u_n\oplus v_n$ where $u_n=\sum_{i=1}^n x_ie_i$ and
  $v_n=\sum_{i=n+1}^\infty x_ie_i$. Note that $\norm{v_n}\to 0$ and
   $u_n=\oplus{\rm
    -}\!\sum_{i=1}^n x_i^p\odot e_i$. Therefore,
  \begin{displaymath}
    \Bignorm{x\ominus\bigl(\oplus{\rm -}\!\sum_{i=1}^n
       x_i^p\odot e_i\bigr)}_{(p)}
    =\norm{x\ominus u_n}_{(p)}
    =\norm{v_n}_{(p)}
    \le\norm{v_n}^p\to 0.
  \end{displaymath}
  This proves~\eqref{espans}. It follows from~\eqref{espans} that the
  closed linear span of $(e_i)$ is dense in $E_{(p)}$ and, therefore,
  in~$\overline{E_{(p)}}$. Since the sequence $(e_i)$ remains disjoint
  in~$\overline{E_{(p)}}$, this yields~\eqref{ei-basis-Ep}.
\end{proof}

\begin{proposition}\label{p-est-basis}
  Suppose that $E$ is a Banach lattice with a normalized basis. If $E$
  satisfies the lower $p$-estimate with constant $M$ then
  $\overline{E_{(p)}}$ is lattice isomorphic (isometric if $M=1$) to
  $\ell_1$ via $(x_i)\in\ell_1\mapsto\sum_{i=1}^\infty x_i\odot e_i\in E_{(p)}$.
\end{proposition}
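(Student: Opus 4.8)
The plan is to prove that the indicated map $\Phi\colon\ell_1\to\overline{E_{(p)}}$, $\Phi\bigl((a_i)\bigr)=\oplus{\rm -}\!\sum_{i=1}^\infty a_i\odot e_i$, is a surjective linear lattice isomorphism satisfying $\frac{1}{M^p}\norm{(a_i)}_1\le\bignorm{\Phi\bigl((a_i)\bigr)}_{(p)}\le\norm{(a_i)}_1$ for all $(a_i)\in\ell_1$; when $M=1$ these bounds coincide and $\Phi$ is an isometry. Throughout I work inside the vector lattice $(E,\oplus,\odot,\le)$, recalling that $\norm{\cdot}_{(p)}$ is a lattice seminorm there, hence subadditive for $\oplus$ and homogeneous for $\odot$, so that $\norm{\alpha\odot x}_{(p)}=\abs{\alpha}\norm{x}_{(p)}$. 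Since the basis is normalized, Lemma~\ref{Ep-basis}\eqref{same-norm} gives $\norm{e_i}_{(p)}=\norm{e_i}^p=1$, and therefore $\norm{a_i\odot e_i}_{(p)}=\abs{a_i}$ for every scalar $a_i$.

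For the two norm estimates I would argue on finite $\oplus$-combinations and pass to the limit. Subadditivity of $\norm{\cdot}_{(p)}$ for $\oplus$ gives the upper bound $\bignorm{\oplus{\rm -}\!\sum_{i=m}^n a_i\odot e_i}_{(p)}\le\sum_{i=m}^n\abs{a_i}$; taking $m=1$, $n\to\infty$ yields $\bignorm{\Phi\bigl((a_i)\bigr)}_{(p)}\le\norm{(a_i)}_1$, and letting $m,n\to\infty$ shows the partial sums are Cauchy, so $\Phi$ is well defined with values in $\overline{E_{(p)}}$. For the lower bound, observe that the vectors $a_i\odot e_i=a_i^{\frac{1}{p}}e_i$ are pairwise disjoint in $E$ (being scalar multiples of the disjoint $e_i$) and that their $\oplus$-sum agrees coordinatewise with their ordinary sum in $E$; hence Lemma~\ref{p-est-norm} applies to the finite sums and gives $\bignorm{\oplus{\rm -}\!\sum_{i=1}^n a_i\odot e_i}_{(p)}\ge\frac{1}{M^p}\sum_{i=1}^n\norm{a_i\odot e_i}_{(p)}=\frac{1}{M^p}\sum_{i=1}^n\abs{a_i}$, and the limit gives $\bignorm{\Phi\bigl((a_i)\bigr)}_{(p)}\ge\frac{1}{M^p}\norm{(a_i)}_1$.

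It remains to see that $\Phi$ is a linear lattice isomorphism onto $\overline{E_{(p)}}$. Using the coordinatewise form of the functional calculus recorded at the start of this subsection, I would check $(a_i+b_i)\odot e_i=(a_i\odot e_i)\oplus(b_i\odot e_i)$ and $(\lambda a_i)\odot e_i=\lambda\odot(a_i\odot e_i)$ coordinate by coordinate, so that $\Phi$ is additive and $\odot$-homogeneous; since the $i$-th coordinate of $\Phi\bigl((a_i)\bigr)$ is $a_i^{\frac{1}{p}}$, which is nonnegative precisely when $a_i\ge0$, the map $\Phi$ sends the positive cone of $\ell_1$ onto the positive cone of $\overline{E_{(p)}}$ and is thus a lattice homomorphism. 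The two-sided estimate makes $\Phi$ injective with closed range, and its range contains every finite $\oplus$-combination of the $e_i$, which are dense in $\overline{E_{(p)}}$ by Lemma~\ref{Ep-basis}\eqref{ei-basis-Ep}; a closed dense subspace is the whole space, so $\Phi$ is onto. I expect the only real care to lie in the bookkeeping of the operations $\oplus$ and $\odot$ --- in particular in justifying that Lemma~\ref{p-est-norm}, stated for ordinary disjoint sums in $E$, may be applied to the $\oplus$-sum $\oplus{\rm -}\!\sum a_i\odot e_i$, which is legitimate because the two sums coincide for disjoint summands by Lemma~\ref{calc-facts}\eqref{perp-osum}.
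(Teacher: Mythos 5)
Your proposal is correct and takes essentially the same route as the paper: the heart of both arguments is the two-sided estimate $\tfrac{1}{M^p}\sum_{i=1}^n\abs{a_i}\le\bignorm{\sum_{i=1}^n a_i\odot e_i}_{(p)}\le\sum_{i=1}^n\abs{a_i}$ on finite sums, with the lower bound from Lemma~\ref{p-est-norm} applied to the disjoint vectors $a_i^{1/p}e_i$ and the upper bound from the triangle inequality plus $\norm{a\odot e_i}_{(p)}=\abs{a}$ (normalization and Lemma~\ref{Ep-basis}\eqref{same-norm}). The extra bookkeeping you carry out --- convergence of the partial $\oplus$-sums, linearity of $\Phi$ with respect to $\oplus$ and $\odot$, the lattice-homomorphism check, and surjectivity via density of $\Span(e_i)$ from Lemma~\ref{Ep-basis}\eqref{ei-basis-Ep} --- is exactly what the paper's terse proof leaves implicit, so the two arguments agree in substance.
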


\begin{proof}
  Let $x\in E$ such that $x=\sum_{i=1}^n x_i\odot e_i=\sum_{i=1}^n
  x_i^{1/p}e_i$. It follows from Lemma~\ref{p-est-norm} that
  \begin{displaymath}
    \norm{x}_{(p)}
    \ge\tfrac{1}{M^p}\sum_{i=1}^n\norm{x_i\odot e_i}_{(p)}
    =\tfrac{1}{M^p}\sum_{i=1}^n\abs{x_i}.
  \end{displaymath}
  On the other hand, by the triangle inequality, we have
  \begin{math}
    \norm{x}_{(p)}
    \le\sum_{i=1}^n\norm{x_i\odot e_i}_{(p)}
    =\sum_{i=1}^n\abs{x_i}.
  \end{math}
%\marg{Need more details?}
\end{proof}

\subsection{Fremlin tensor product of Banach lattices with bases}

Given Banach spaces $E$ and $F$ with bases $(e_i)$ and $(f_i)$,
respectively, then the double sequence $(e_i\otimes f_j)$ is a basis
for the Banach space projective tensor product $E\otimes_{\pi}F$,
see~\cite{Gelbaum:61}. However, even if these respective bases are
unconditional then $(e_i\otimes f_j)$ is not necessarily an
unconditional basis for $E\otimes_{\pi}F$. Indeed, it was shown
in~\cite{Kwapien:70} that the Banach space projective tensor product
$\ell_p\otimes_{\pi}\ell_q$ with $1/p + 1/q \le 1$ does not have an
unconditional basis.

Recall that if $E$ is a Banach lattice with a basis then the basis is
automatically 1-unconditional.

\begin{lemma}
  Suppose that $E$ and $F$ are Banach lattices with bases, $(e_i)$ and
  $(f_j)$, respectively. Then the double sequence $(e_i\otimes
  f_j)_{i,j}$ is disjoint in $E\ftp F$. Moreover, this sequence is a
  1-unconditional basis of $E\ftp F$ (under any enumeration).
\end{lemma}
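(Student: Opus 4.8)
The plan is to prove the disjointness first, then deduce the basis property from it together with results already established in the paper. For the disjointness claim, I would show that the elementary tensors $e_i \otimes f_j$ are pairwise disjoint in $E \ftp F$. Fix two distinct pairs $(i,j) \neq (k,\ell)$; then either $e_i \perp e_k$ in $E$ or $f_j \perp f_\ell$ in $F$ (since the bases are $1$-unconditional, distinct basis vectors are disjoint). Using the lattice-homomorphism correspondence of Remark~\ref{phi-tensor}, the disjointness of elementary tensors should follow from the fact that $\abs{e_i \otimes f_j} = \abs{e_i} \otimes \abs{f_j}$ and that the band/ideal structure in $E \ftp F$ respects products of disjoint factors. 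Concretely, I would compute $\bigl(\abs{e_i}\otimes\abs{f_j}\bigr) \wedge \bigl(\abs{e_k}\otimes\abs{f_\ell}\bigr)$ and argue it is $0$: if, say, $e_i \perp e_k$, then testing against an arbitrary positive $T \in L^r(E,F^*)$ via the representation in Proposition~\ref{ftp-repr} should force the infimum to vanish, because the order structure of $L^r(E,F^*)^*$ detects disjointness coordinatewise.

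Once disjointness is in hand, I would establish that $(e_i \otimes f_j)$ is a basis. The natural route is to show the linear span of the elementary tensors is dense in $E \ftp F$ (which holds since $E \otimes F$ is dense by definition, and every element of $E \otimes F$ is a finite combination of $e_i \otimes f_j$ by bilinearity and the fact that $(e_i)$, $(f_j)$ span dense subspaces of $E$, $F$). Then the disjointness, combined with $1$-unconditionality, should give that the partial-sum projections are uniformly bounded; in fact, for a disjoint sequence in a Banach lattice, any sub-sum $\sum_{(i,j) \in F} c_{ij}\, e_i \otimes f_j$ over a finite index set $F$ is dominated in norm by the full sum, which is exactly what $1$-unconditionality of a disjoint basis means. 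The enumeration-independence then follows automatically, since unconditional convergence does not depend on the ordering.

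The main obstacle I expect is the disjointness computation itself: verifying $\bigl(\abs{e_i}\otimes\abs{f_j}\bigr) \wedge \bigl(\abs{e_k}\otimes\abs{f_\ell}\bigr) = 0$ rigorously inside $E \ftp F$ rather than in a convenient function representation. The functional calculus and the pointwise intuition (thinking of $e_i \otimes f_j$ as a product function on $\Omega_E \times \Omega_F$ supported on a ``rectangle'') make the result intuitively obvious, but to make it precise in the abstract Banach-lattice setting I would lean on Proposition~\ref{ftp-repr}: an element $u \in E \ftp F$ is determined by its action $\langle u, T\rangle = \langle Tx, y\rangle$ on regular operators, and I would characterize the infimum $u \wedge v$ through the positive cone. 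Alternatively, and perhaps more cleanly, I would use a lattice bimorphism into a concrete lattice that separates the rectangles---for instance, mapping $E \ftp F$ by $(\varphi)^\otimes$ where $\varphi$ is built from coordinate functionals---to witness that the meet is zero. I would verify the remaining basis estimates only schematically, since they reduce to the standard characterization of disjoint $1$-unconditional sequences once disjointness and density are secured.
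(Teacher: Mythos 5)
Your overall architecture matches the paper's (embed $E\ftp F$ into $L^r(E,F^*)^*$ via Proposition~\ref{ftp-repr}, test the meet against positive operators, then get the basis property from disjointness plus density), but the step you yourself flag as ``the main obstacle'' is exactly where the content of the lemma lies, and your proposal does not close it. The phrase ``the order structure of $L^r(E,F^*)^*$ detects disjointness coordinatewise'' is not an argument: lattice operations in a dual Banach lattice are given by the Riesz--Kantorovich formula, not by evaluation, so for positive $T$ one has $\bigl\langle(e_i\otimes f_j)\wedge(e_k\otimes f_l),T\bigr\rangle=\inf_{0\le S\le T}\bigl[(e_i\otimes f_j)(S)+(e_k\otimes f_l)(T-S)\bigr]$, and \emph{not} $\langle e_i\otimes f_j,T\rangle\wedge\langle e_k\otimes f_l,T\rangle$. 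The proof therefore consists in exhibiting, for each positive $T$, a splitting $0\le S\le T$ that drives this infimum to $0$. The paper does this explicitly: set $c=\langle Te_k,f_l\rangle$ and $S=c\,e_k^*\otimes f_l^*$, verify $0\le S\le T$ by checking on basis vectors of $E$ and coordinates in $F^*$ (legitimate because positivity is determined coordinatewise when the order comes from a basis), and compute $(e_i\otimes f_j)(S)+(e_k\otimes f_l)(T-S)=c\,e_k^*(e_i)f_l^*(f_j)+c-c=0$ since $(i,j)\ne(k,l)$. Note this single choice of $S$ works uniformly; no case split ``$e_i\perp e_k$ or $f_j\perp f_\ell$'' is needed. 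Your suggested alternative---witnessing the meet by a lattice bimorphism built from coordinate functionals---would also fail as stated: a lattice homomorphism $\Phi$ only yields $\Phi(u\wedge v)=\Phi(u)\wedge\Phi(v)=0$, i.e., $u\wedge v\in\ker\Phi$; to conclude $u\wedge v=0$ you need the family of such homomorphisms to separate positive elements of $E\ftp F$, and for the maps $(x,y)\mapsto e_m^*(x)f_n^*(y)$ that separation is essentially equivalent to the basis statement being proved. The separation actually available is by positive $T\in L^r(E,F^*)$ (Proposition~\ref{ftp-repr}), which returns you to the Riesz--Kantorovich route and the missing splitting.

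There is also a smaller slip in your density argument: a general elementary tensor $x\otimes y\in E\otimes F$ is \emph{not} a finite linear combination of the $e_i\otimes f_j$, since the basis expansions of $x$ and $y$ are infinite series. The correct (and easy) fix is the paper's: take basis projections $P$, $Q$ with $x_0=Px$, $y_0=Qy$ close to $x$, $y$, and use that $\norm{\cdot}_\api$ is a cross norm to estimate $\norm{x\otimes y-x_0\otimes y_0}_\api\le\norm{x_0}\norm{y-y_0}+\norm{x-x_0}\norm{y_0}+\norm{x-x_0}\norm{y-y_0}\le 3\varepsilon$; this gives density of $\Span\{e_i\otimes f_j\}$ in $E\otimes F$ and hence in $E\ftp F$. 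Your final step---that a disjoint sequence with dense closed span is a 1-unconditional basis under any enumeration, because the modulus of any sub-sum is dominated by the modulus of the full sum---is correct and matches the paper.
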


\begin{proof}
  First, we will show that $(e_i\otimes f_j)\perp(e_k\otimes f_l)$
  provided $(i,j)\ne(k,l)$. Using Proposition~\ref{ftp-repr}, we
  consider $E\ftp F$ as a sublattice of $L^r(E,F^*)^*$. It suffices to
  show that
  \begin{displaymath}
    \bigl\langle(e_i\otimes f_j)\wedge(e_k\otimes f_l),T\bigr\rangle=0
  \end{displaymath}
  for every positive $T\colon E\to F^*$. By
  \cite[Theorem~3.49]{Aliprantis:06},
  \begin{equation}
    \label{wedge}
    \bigl\langle(e_i\otimes f_j)\wedge(e_k\otimes f_l),T\bigr\rangle=
    \inf\limits_{0\le S\le T}\bigl\{
       (e_i\otimes f_j)(S)+(e_k\otimes f_l)(T-S)\bigr\}.
  \end{equation}
  Put $c=\langle Te_k,f_l\rangle$ and define $S\colon E\to F^*$ via
  $S=ce^*_k\otimes f^*_l$, where $e^*_k$ and $f^*_l$ are the
  appropriate bi-orthogonal functionals. That is, for $x\in E$ we have
  $Sx=ce^*_k(x)f^*_l$.  Clearly, $S\ge 0$. We will show that $S\le T$.
  It suffices to show that $Se_m\le Te_m$ for every~$m$. But if $m\ne
  k$ then $Se_m=0\le Te_m$. It is left to prove that $Se_k\le Te_k$.
  Note that $Se_k=cf^*_l$. It suffices to show that $\langle
  Se_k,f_n\rangle\le\langle Te_k,f_n\rangle$ for all~$n$. But this is
  true because
  \begin{math}
    \langle Se_k,f_n\rangle=cf^*_l(f_n)=0,
  \end{math}
  when $n\ne l$, and
  \begin{math}
    \langle Se_k,f_l\rangle=cf^*_l(f_l)=c=\langle Te_k,f_l\rangle
  \end{math}
  Now substituting this $S$ into~\eqref{wedge}, we get
  \begin{multline*}
    (e_i\otimes f_j)(S)+(e_k\otimes f_l)(T-S)=
    ce_k^*(e_i)f^*_l(f_j)+
      \langle Te_k,f_l\rangle-\langle Se_k,f_l\rangle
      =0+c-c=0
  \end{multline*}
  because $(i,j)\ne(k,l)$.

  Being a disjoint sequence in a Banach lattice, $(e_i\otimes
  f_j)_{i,j}$ is a 1-unconditional basic sequence. It is left to show
  that its closed span is all of $E\ftp F$.  Take $x\in E$ and $y\in
  F$ with $\norm{x},\norm{y}\le 1$. Given any $\varepsilon\in(0,1)$, we can
  find basis projections $P$ and $Q$ on $E$ and~$F$, respectively,
  such that $x_0=Px$ and $y_0=Qy$ satisfy $\norm{x-x_0}<\varepsilon$
  and $\norm{y-y_0}<\varepsilon$. It follows that
  \begin{multline*}
   \norm{x\otimes y-x_0\otimes y_0}_\api=
   \norm{x_0\otimes(y-y_0)+(x-x_0)\otimes y_0+(x-x_0)\otimes(y-y_0)}_\api\\
   \le\norm{x_0}\norm{y-y_0}+\norm{x-x_0}\norm{y_0}
       +\norm{x-x_0}\norm{y-y_0}
   \le 3\varepsilon.
  \end{multline*}
  Since $x_0\otimes y_0$ is in $\Span\{e_i\otimes f_j\mid
  i,j\in\mathbb N\}$, it follows that $x\otimes y$ can be approximated
  by elements of the span. It follows that the span is dense in
  $E\otimes F$ and, therefore, in $E\ftp F$.
\end{proof}

\subsection{The diagonal of $E\ftp E$}

Suppose that $E$ is a Banach lattice with a basis
$(e_i)$. As we just observed, $(e_i\otimes e_j)_{i,j}$ is a
1-unconditional basis in $E\ftp E$. It is easy to see that
\begin{eqnarray}
  \notag \Ioc&=&\cspan\bigl\{(e_i\otimes e_j)\mid i\ne j\bigr\}\\
  \label{diag-basis}(E\ftp E)/\Ioc&=&\cspan\bigl\{(e_i\otimes e_i)\mid i\in\mathbb N\}
\end{eqnarray}
In particular, we can view $\Ioc$ and $(E\ftp E)/\Ioc$ as two mutually
complementary bands in $E\ftp E$.  In view of this, our interpretation
of $(E\ftp E)/\Ioc$ as the diagonal of $E\ftp E$ is consistent with,
e.g., Examples~2.10 and 2.23 in~\cite{Ryan:02}.

It follows immediately from Corollary~\ref{general-normed} that
$(E\ftp E)/\Ioc$ is lattice isometric to~$\overline{E_{(2)}}$.
Moreover, in view of \eqref{diag-basis}, the map $T$ in
Corollary~\ref{general-normed} has a particularly simple form:
$T\colon e_i\to e_i\otimes e_i$. Thus, Corollary~\ref{general-normed}
for Banach lattices with a basis can be stated as follows.

\begin{theorem}\label{m-basis}
  Suppose that $E$ is a Banach lattice with a
  basis $(e_i)$. Then the map that sends
  $\sum_{i=1}^\infty u_ie_i\otimes e_i$ in $E\ftp E$ into
  $\sum_{i=1}^\infty u_i\odot e_i$ in $\overline{E_{(2)}}$ is a surjective
  lattice isometry between $(E\ftp E)/\Ioc$ and~$\overline{E_{(2)}}$.
\end{theorem}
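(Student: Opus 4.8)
The plan is to read the theorem off from Corollary~\ref{general-normed} and Theorem~\ref{general-gen} together with the two explicit basis descriptions already in hand. Since $E$ has a basis it is discrete, so Corollary~\ref{discr-norm} tells us that $\norm{\cdot}_{(2)}$ is genuinely a norm; consequently $E_{[2]}=\overline{E_{(2)}}$ and the map of Theorem~\ref{general-gen} becomes a surjective lattice isometry $T\colon\overline{E_{(2)}}\to(E\ftp E)/\Ioc$ with $T(x)=x\otimes\abs{x}+\Ioc$ for every $x\in E$. All that remains is to express this $T$ (equivalently, its inverse) explicitly through the $1$-unconditional basis $(e_i)$ of $\overline{E_{(2)}}$ furnished by Lemma~\ref{Ep-basis}\eqref{ei-basis-Ep} and the $1$-unconditional basis $(e_i\otimes e_i)$ of $(E\ftp E)/\Ioc$ provided by~\eqref{diag-basis}.

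Next I would evaluate $T$ on the generators. Each $e_i$ is positive, so $\abs{e_i}=e_i$ and $T(e_i)=e_i\otimes e_i+\Ioc$, showing that $T$ sends one basis onto the other. More precisely, for a scalar $u_i$ we have $u_i\odot e_i=u_i^{\frac{1}{2}}e_i$ and $\bigabs{u_i^{\frac{1}{2}}e_i}=\abs{u_i}^{\frac{1}{2}}e_i$, so that
\[
  T(u_i\odot e_i)=\bigl(u_i^{\frac{1}{2}}e_i\bigr)\otimes\bigl(\abs{u_i}^{\frac{1}{2}}e_i\bigr)+\Ioc
    =u_i^{\frac{1}{2}}\abs{u_i}^{\frac{1}{2}}(e_i\otimes e_i)+\Ioc
    =u_i(e_i\otimes e_i)+\Ioc,
\]
the last step being the scalar instance of~\eqref{sq-abs}.

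Finally, since $T$ is a bounded lattice isometry it is continuous and respects the vector operations $\oplus$ and $\odot$ of $\overline{E_{(2)}}$, so it carries the basis expansion $\oplus{\rm -}\!\sum_i u_i\odot e_i$ term by term onto $\sum_i u_i(e_i\otimes e_i)+\Ioc$. This identifies $T^{-1}$ with the map in the statement, and the inverse of a surjective lattice isometry is again a surjective lattice isometry, which yields the conclusion.

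The step that needs genuine care is the coefficient bookkeeping under the convention $t^p=\abs{t}^p\sign t$: the matching of the two bases with equal coefficients $u_i$ hinges precisely on the cancellation $u_i^{\frac{1}{2}}\abs{u_i}^{\frac{1}{2}}=u_i$. Once that is in place, no further convergence analysis is required, because an isometric lattice isomorphism automatically transports one $1$-unconditional basis expansion onto the other.
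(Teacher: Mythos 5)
Your proposal is correct and follows essentially the same route as the paper: the paper likewise derives Theorem~\ref{m-basis} directly from Corollary~\ref{general-normed} (with discreteness via Corollary~\ref{discr-norm} giving $E_{[2]}=\overline{E_{(2)}}$), the basis description~\eqref{diag-basis}, and the observation that the isometry acts on generators by $e_i\mapsto e_i\otimes e_i$. Your explicit coefficient check $T(u_i\odot e_i)=u_i^{\frac{1}{2}}\abs{u_i}^{\frac{1}{2}}(e_i\otimes e_i)+\Ioc=u_i(e_i\otimes e_i)+\Ioc$ just makes precise, via the convention $t^p=\abs{t}^p\sign t$, a step the paper leaves implicit.
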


Combining this with Propositions~\ref{p-conv-BL}
and~\ref{p-est-basis}, we get the following corollaries.

\begin{corollary}
  Suppose that $E$ is a Banach lattice with a basis. If $E$ is
  2-convex then $(E\ftp E)/\Ioc$ is lattice isometric to~$E_{(2)}$.
\end{corollary}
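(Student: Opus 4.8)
The plan is to deduce this as an immediate specialization of Theorem~\ref{m-basis}, using the 2-convexity hypothesis only to eliminate the completion. Theorem~\ref{m-basis} already furnishes a surjective lattice isometry between $(E\ftp E)/\Ioc$ and $\overline{E_{(2)}}$ for every Banach lattice $E$ with a basis, implemented explicitly by $\sum_{i}u_ie_i\otimes e_i\mapsto\sum_{i}u_i\odot e_i$. Thus the only new thing to verify is that, under 2-convexity, passing to the completion changes nothing, i.e. that $E_{(2)}$ is already a Banach lattice.

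First I would apply Proposition~\ref{p-conv-BL} with $p=2$. Since $E$ is 2-convex by assumption, that proposition asserts that $E_{(2)}$ is a Banach lattice; in particular $\norm{\cdot}_{(2)}$ is a complete norm on $E_{(2)}$. Consequently $E_{(2)}$ coincides with its own completion, so that $\overline{E_{(2)}}=E_{(2)}$.

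Feeding this identification into Theorem~\ref{m-basis} then yields the desired lattice isometry from $(E\ftp E)/\Ioc$ onto $E_{(2)}$, given explicitly on the basis by $e_i\otimes e_i\mapsto e_i$. I do not anticipate any genuine obstacle: the entire content is contained in the two cited results, and the single point requiring attention is simply confirming that the 2-convexity assumption is precisely the hypothesis Proposition~\ref{p-conv-BL} requires in order to guarantee completeness of $E_{(2)}$, after which the completion appearing in Theorem~\ref{m-basis} may be dropped.
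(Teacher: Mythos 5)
Your proof is correct and is essentially identical to the paper's own argument: the paper derives this corollary precisely by combining Theorem~\ref{m-basis} with Proposition~\ref{p-conv-BL}, using the 2-convexity hypothesis only to guarantee that $\norm{\cdot}_{(2)}$ is a complete norm, so that $\overline{E_{(2)}}=E_{(2)}$ and the completion in Theorem~\ref{m-basis} can be dropped.
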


\begin{corollary}\label{p-est-diag}
  Suppose that $E$ is a Banach lattice with a normalized basis
  $(e_i)$, satisfying a lower 2-estimate with constant~$M$. Then
  $(E\ftp E)/\Ioc$ is lattice isomorphic (isometric if $M=1$) to
  $\ell_1$ via $(x_i)\in\ell_1\mapsto\sum_{i=1}^\infty x_ie_i\otimes
  e_i$.
\end{corollary}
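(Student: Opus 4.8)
The plan is to realize the asserted map as a composition of two identifications already supplied by the excerpt, specialized to $p=2$, rather than to argue from scratch. First I would invoke Proposition~\ref{p-est-basis} with $p=2$: since $E$ has a normalized basis $(e_i)$ and satisfies the lower $2$-estimate with constant $M$, the map
\begin{math}
  J\colon(x_i)\in\ell_1\mapsto\oplus{\rm -}\!\sum_{i=1}^\infty x_i\odot e_i\in\overline{E_{(2)}}
\end{math}
is a lattice isomorphism, and a lattice isometry when $M=1$. This gives a first identification of $\overline{E_{(2)}}$ with $\ell_1$, reading off the coordinates of a vector with respect to the basis.

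Next I would use Theorem~\ref{m-basis}, which produces a surjective lattice isometry
\begin{math}
  U\colon(E\ftp E)/\Ioc\to\overline{E_{(2)}}
\end{math}
carrying $\sum_{i}u_ie_i\otimes e_i$ to $\oplus{\rm -}\!\sum_i u_i\odot e_i$. Because $U$ is a lattice isometry, so is its inverse $U^{-1}\colon\overline{E_{(2)}}\to(E\ftp E)/\Ioc$, which is described by $\oplus{\rm -}\!\sum_i u_i\odot e_i\mapsto\sum_i u_ie_i\otimes e_i$. I would then form the composition $U^{-1}\circ J\colon\ell_1\to(E\ftp E)/\Ioc$ and trace an element through it: $J$ sends $(x_i)$ to $\oplus{\rm -}\!\sum_i x_i\odot e_i\in\overline{E_{(2)}}$, and $U^{-1}$ then sends this to $\sum_{i=1}^\infty x_ie_i\otimes e_i\in(E\ftp E)/\Ioc$, matching $u_i$ with $x_i$ in the two descriptions. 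This is exactly the map in the statement.

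Finally, being a composition of a lattice isomorphism with a lattice isometry, $U^{-1}\circ J$ is a lattice isomorphism; and when $M=1$ the factor $J$ is itself an isometry, so in that case the composition is a lattice isometry onto $(E\ftp E)/\Ioc$. There is essentially no obstacle in this argument, since all the analytic content was absorbed into Proposition~\ref{p-est-basis} (the lower $2$-estimate forcing an $\ell_1$ structure on the concavification) and Theorem~\ref{m-basis} (the isometric identification of the diagonal with $\overline{E_{(2)}}$); the only point requiring care is to check that the two identifications of $\overline{E_{(2)}}$ with $\ell_1$ compose to the stated formula $(x_i)\mapsto\sum_i x_ie_i\otimes e_i$, which is immediate from the displayed descriptions of $J$ and $U^{-1}$.
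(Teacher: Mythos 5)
Your proposal is correct and is essentially the paper's own argument: the corollary is stated there as an immediate consequence of combining Theorem~\ref{m-basis} with Proposition~\ref{p-est-basis} (for $p=2$), exactly the composition $U^{-1}\circ J$ you describe. Your explicit tracing of $(x_i)\mapsto\sum_{i}x_i\odot e_i\mapsto\sum_{i}x_ie_i\otimes e_i$ just spells out the detail the paper leaves implicit.
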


\begin{example}
\label{examp:ellp}
  If $E=\ell_p$ for $1\le p<\infty$ then $E^2$ (and, therefore,
  $E_{(2)}$) can be identified as a vector space with
  $\ell_{\frac{p}{2}}$. In the case $p\ge 2$, $E$ is 2-convex and
  hence $(E\ftp E)/\Ioc = E_{[2]} = E_{(2)} = \ell_{\frac{p}{2}}$. In
  the case $1\le p<2$, $E$ satisfies the lower 2-estimate and hence
  $(E\ftp E)/\Ioc=E_{[2]} = \ell_1$. On the other hand, in the
  latter case, $\norm{\cdot}_{(2)}$ is the $\ell_1$-norm on
  $E_{(2)}=\ell_{\frac{p}{2}}$ and we have
  \begin{math}
    E_{[2]}
    =\overline{(E_{(2)},\norm{\cdot}_{(2)})}
    =\overline{(\ell_{\frac{p}{2}},\norm{\cdot}_{\ell_1})} 
    =\ell_1.
  \end{math}
\end{example}

%\bibliographystyle{amsalpha}
%\bibliography{../../tv}

\providecommand{\bysame}{\leavevmode\hbox to3em{\hrulefill}\thinspace}

\end{document}